%
%
%

\documentclass[12pt]{amsart}
\usepackage{amssymb}
\usepackage{hyperref}

\setlength{\voffset}{-1in}
\setlength{\topmargin}{1.5cm}
\setlength{\hoffset}{-1in}
\setlength{\oddsidemargin}{2cm}
\setlength{\evensidemargin}{2cm}
\setlength{\textwidth}{17cm}
\setlength{\textheight}{24.5cm}
\setlength{\footskip}{0.5cm}

\theoremstyle{plain}
\newtheorem{theorem}{Theorem}[section]
\newtheorem{proposition}[theorem]{Proposition}
\newtheorem{corollary}[theorem]{Corollary}
\newtheorem{lemma}[theorem]{Lemma}

\theoremstyle{definition}

\newtheorem{example}[theorem]{Example}
\newtheorem{remark}[theorem]{Remark}

\newtheorem{conjecture}[theorem]{Conjecture}

\newtheorem{notation}[theorem]{Notation}

\theoremstyle{remark}

\numberwithin{equation}{section}

\newcommand{\N}{\mathbb N}
\newcommand{\Z}{\mathbb Z}

\newcommand{\R}{\mathbb R}
\newcommand{\C}{\mathbb C}

\newcommand{\GL}{\operatorname{GL}}

\newcommand{\Ot}{\operatorname{O}}
\newcommand{\SO}{\operatorname{SO}}
\newcommand{\SU}{\operatorname{SU}}

\newcommand{\sll}{\mathfrak{sl}}

\newcommand{\su}{\mathfrak{su}}

\newcommand{\op}{\operatorname}

\newcommand{\spec}{\operatorname{Spec}}

\newcommand{\diag}{\operatorname{diag}}

\newcommand{\extremo}{\alpha}
\newcommand{\abc}{(a,b,c)}
\newcommand{\xyz}{(x,y,z)}

\setcounter{tocdepth}{1}

\title[The smallest Laplace eigenvalue]{The smallest Laplace eigenvalue of homogeneous 3-spheres}
\author{Emilio~A.~Lauret}
\address{CIEM--FaMAF (CONICET), Universidad Nacional de C\'ordoba, Medina Allende s/n, Ciudad Universitaria, 5000 C\'ordoba, Argentina.}
\email{elauret@famaf.unc.edu.ar}
\subjclass[2010]{Primary 58J50, Secondary 35P15, 22C05, 58J53, 53C30.}
\keywords{Laplace, eigenvalue estimate, left-invariant metric, isospectral.}
\thanks{This work was carried out while the author was an Alexander von Humboldt Foundation Postdoctoral Fellow at Humboldt-Universit\"at zu Berlin between April 2017 and May 2018. 
The support and hospitality are gratefully acknowledged.
Supported also by grants from CONICET and FONCyT}
\date{July 24, 2018}

\begin{document}

\begin{abstract}
We establish an explicit expression for the smallest non-zero eigenvalue of the Laplace--Beltrami operator on every homogeneous metric on the 3-sphere, or equivalently, on SU(2) endowed with  left-invariant metric. 
For the subfamily of 3-dimensional Berger spheres, we obtain a full description of their spectra. 

We also give several consequences of the mentioned expression. 
One of them improves known estimates for the smallest non-zero eigenvalue in terms of the diameter for homogeneous 3-spheres. 
Another application shows that the spectrum of the Laplace--Beltrami operator distinguishes up to isometry any left-invariant metric on SU(2). 
It is also proved the non-existence of constant scalar curvature metrics conformal and arbitrarily close to any non-round homogeneous metric on the 3-sphere. 

All of the above results are extended to left-invariant metrics on SO(3), that is, homogeneous metrics on the 3-dimensional real projective space. 
\end{abstract}

\maketitle

\tableofcontents

\section{Introduction}\label{sec:intro}

Let $(M,g)$ be a compact connected Riemannian manifold without boundary.
Let us denote by $\spec(M,g)$ the spectrum of the Laplace--Beltrami operator $\Delta_{g}$ on $(M,g)$. 
It is well known that $\spec(M,g)$ consists of a multiset of eigenvalues
\begin{equation}\label{eq1:spectrum(M,g)}
0=\lambda_0(M,g)<\lambda_1(M,g)\leq \lambda_2(M,g)\leq \lambda_3(M,g)\leq \cdots,
\end{equation}
satisfying that $\lambda_j(M,g)\to\infty$ when $j\to \infty$, so that each of them has finite multiplicity.

An explicit description of the spectrum of $\Delta_g$ is known only for very particular choices of $(M,g)$.
Most of these choices are covered by homogeneous Riemannian manifolds. 
Even expressions for the smallest non-zero eigenvalue are not avaiable in general, though there exist several estimates for it under different geometric conditions (cf.\ in \cite[\S9.10]{Berger-panoramic} or \cite{LingLu10}).

The primary goal of this article is to give an explicit expression of $\lambda_1(S^3,g)$ for any homogeneous Riemannian metric $g$ on the $3$-dimensional sphere $S^3$. 

The group $\SU(2)$ is diffeomorphic to $S^3$. 
Homogeneous metrics on $S^3$ are invariant by $\SU(2)$.
Consequently, they are in correspondence with left-invariant metrics on $\SU(2)$. 
Moreover, round metrics on $S^3$ correspond to bi-invariant metrics on $\SU(2)$. 
The space of left-invariant metrics on $\SU(2)$ is parametrized by three positive real numbers $a$, $b$ and $c$. 
Indeed, if $\{X_1,X_2,X_3\}$ is an orthonormal basis of $\su(2)$ with respect to a fixed bi-invariant metric, then any left-invariant metric is isometric to some metric $g_{\abc}$ induced by the inner product on $\su(2)$ with orthonormal basis $\{aX_1,bX_2,cX_3\}$. 
In other words, the matrix of $g_{\abc}$ on the tangent space at the identity, which is identified with $\su(2)$, with respect to the above basis, is given by
\begin{equation}\label{eq2:g_matrix}
[g_{\abc}(X_i,X_j)]_{i,j} = 
\begin{pmatrix}
1/a^2 \\ & 1/b^2 \\ && 1/c^2
\end{pmatrix}.
\end{equation}
See Subsection~\ref{subsec:left-invmet} for more details, specially for our particular choice of $\{X_1,X_2,X_3\}$ in \eqref{eq2:X1X2X3}, which gives constant sectional curvature $1/a^2$ to $(\SU(2),g_{(a,a,a)})$ for any $a>0$. 
It turns out that the isometry class of $g_{\abc}$ is invariant by any permutation of $\abc$, so we will usually assume $a\geq b\geq c>0$ without losing generality.

The next result gives an explicit expression of $\lambda_1(S^3,g)$ for every homogeneous metric $g$ on $S^3$ in terms of the parameters $a,b,c$ introduced above.

\begin{theorem}\label{thm1:SUlambda_1}
Under the assumption $a\geq b\geq c>0$, the smallest non-zero eigenvalue of the Laplace--Beltrami operator on $\SU(2)$ endowed with the left-invariant metric $g_{\abc}$ is given by
\begin{equation*}
\lambda_1(\SU(2),g_{\abc})=
\min\{a^{2} + b^{2} + c^{2},\; 4(b^2+c^2)\}.
\end{equation*}
Moreover, the multiplicity of $\lambda_1(\SU(2),g_{\abc})$ in $\spec(\SU(2),g_{\abc})$ is equal to four if $a^{2} + b^{2} + c^{2} <4(b^2+c^2)$, seven if $a^{2} + b^{2} + c^{2} =4(b^2+c^2)$, and three if $a^{2} + b^{2} + c^{2} >4(b^2+c^2)$. 
\end{theorem}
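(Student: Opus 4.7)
The plan is to exploit the Peter--Weyl decomposition
$$
L^2(\SU(2)) = \bigoplus_{n \geq 0} V_n \otimes V_n^{*},
$$
where $V_n$ denotes the $(n+1)$-dimensional irreducible unitary representation of $\SU(2)$, realised by $\pi_n \colon \SU(2) \to \U(V_n)$. Since $\SU(2)$ is unimodular and $\{aX_1, bX_2, cX_3\}$ is a left-invariant orthonormal frame for $g_{\abc}$, the Laplace--Beltrami operator equals $\Delta_{g_{\abc}} = -\bigl(a^2 \tilde X_1^{2} + b^2 \tilde X_2^{2} + c^2 \tilde X_3^{2}\bigr)$, where $\tilde X$ is the left-invariant vector field associated with $X \in \su(2)$. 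Under Peter--Weyl each $\tilde X$ acts on the block $V_n \otimes V_n^{*}$ as $d\pi_n(X) \otimes \mathrm{id}$, so $\Delta_{g_{\abc}}$ restricts there to $T_n \otimes \mathrm{id}$, with
$$
T_n := a^2 A_1 + b^2 A_2 + c^2 A_3, \qquad A_i := -d\pi_n(X_i)^{2} \geq 0.
$$
An eigenvalue of $T_n$ of multiplicity $m$ on $V_n$ therefore contributes multiplicity $m(n+1)$ to $\spec(\SU(2), g_{\abc})$, and the problem reduces to analysing $T_n$ representation by representation.

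For $n = 1$, a direct computation in the defining representation yields $d\pi_1(X_i)^{2} = -\mathrm{Id}$, whence $T_1 = (a^2+b^2+c^2)\,\mathrm{Id}$ contributes a single eigenvalue of multiplicity $2 \cdot 2 = 4$ in $L^2$. For $n = 2$ the representation $\pi_2$ is the adjoint action on $\su(2)$, and in the basis $\{X_1,X_2,X_3\}$ the three operators $-\ad(X_i)^{2}$ are simultaneously diagonal, which yields
$$
T_2 = \diag\!\bigl(4(b^2+c^2),\; 4(a^2+c^2),\; 4(a^2+b^2)\bigr).
$$
Under $a \geq b \geq c$ the smallest eigenvalue of $T_2$ is $4(b^2+c^2)$, of multiplicity $1$ on $V_2$, hence of multiplicity $3$ in $L^2$.

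The crux is to show that the smallest eigenvalue $\mu_n$ of $T_n$ strictly exceeds $\min\{a^2+b^2+c^2,\,4(b^2+c^2)\}$ for every $n \geq 3$, so that no higher irreducible contributes to $\lambda_1$. For this I would use a Casimir-type estimate. The sum $A_1+A_2+A_3$ is (up to the normalisation implicit in the basis $\{X_1,X_2,X_3\}$) the Casimir operator, and hence acts on $V_n$ by the scalar $n(n+2)$; moreover each $A_i \leq n^{2}\,\mathrm{Id}$, since the eigenvalues of the skew-hermitian operator $d\pi_n(X_i)$ on $V_n$ are purely imaginary of absolute value at most $n$ (the highest weight under the one-parameter subgroup it generates). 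Combining $a^2 \geq b^2 \geq c^2$, the positivity of the $A_i$, and the lower bound $A_1 + A_2 = n(n+2)\,\mathrm{Id} - A_3 \geq 2n\,\mathrm{Id}$, one obtains
$$
T_n = c^2 n(n+2)\,\mathrm{Id} + (a^2-c^2) A_1 + (b^2-c^2) A_2 \;\geq\; \bigl(n^{2} c^2 + 2n b^{2}\bigr)\mathrm{Id}.
$$
For $n \geq 3$ this yields $\mu_n \geq 6b^2 + 9c^2 > 4(b^2+c^2)$, as required.

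Assembling the three ranges gives $\lambda_1 = \min\{\mu_1,\mu_2\} = \min\{a^2+b^2+c^2,\,4(b^2+c^2)\}$, and the multiplicity statement follows by inspecting the three orderings of $M_1 := a^2+b^2+c^2$ and $M_2 := 4(b^2+c^2)$: in the boundary case $M_1 = M_2$ one has $a^2 = 3(b^2+c^2)$, which forces $a > b \geq c$, so the remaining eigenvalues $4(a^2+c^2)$ and $4(a^2+b^2)$ of $T_2$ stay strictly above $\lambda_1$, and the $V_1$ and $V_2$ contributions simply add to $4+3=7$, while the strict inequalities in the other two cases give multiplicity $4$ or $3$, respectively. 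The main obstacle I foresee is the $n \geq 3$ bound: keeping the argument at the level of the simple eigenvalue-envelope estimate above, rather than carrying out a weight-by-weight diagonalisation of each $T_n$, is what makes the proof clean, and that reduction is the step that must be justified with care.
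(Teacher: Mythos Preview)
Your argument is correct and follows the same overall architecture as the paper: Peter--Weyl reduction to the operators $T_n$ on each irreducible, explicit computation for $n=1,2$, a lower bound $\mu_n\geq 2nb^2+n^2c^2$ ruling out $n\geq 3$, and a case analysis for the multiplicity. The one genuine difference is in how that lower bound is obtained. The paper writes out the matrix of $T_k$ in a weight basis (Lemma~\ref{lem3:pi_k(C)}) and applies the Gershgorin Circle Theorem column by column (Lemma~\ref{lem3:lowerboundCasimireigenvalues}), arriving at exactly the same inequality $\mu_k\geq 2kb^2+k^2c^2$. Your route---writing $T_n=c^2(A_1+A_2+A_3)+(a^2-c^2)A_1+(b^2-c^2)A_2$, using the Casimir identity $A_1+A_2+A_3=n(n+2)\,\mathrm{Id}$, and the weight bound $A_3\leq n^2\,\mathrm{Id}$ together with $a^2-c^2\geq b^2-c^2\geq 0$---is basis-free and arguably cleaner for the purposes of Theorem~\ref{thm1:SUlambda_1}. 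The trade-off is that the Gershgorin computation in the paper simultaneously yields the sharper odd-$k$ bound $\mu_k\geq a^2+(2k-1)b^2+k^2c^2$ and the interval localisation used later in Theorem~\ref{thm3:nexteigenvalues}, neither of which falls out of the operator inequality.
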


Theorem~\ref{thm3:nexteigenvalues} gives more information on the next eigenvalues in $\spec(\SU(2),g_{\abc})$. 
The proofs of Theorems~\ref{thm1:SUlambda_1} and \ref{thm3:nexteigenvalues} use the algebraic machinery of Lie theory. 
Using the same approach, Urakawa had shown that $\lambda_1(\SU(2),g_{\abc})\leq a^{2} + b^{2} + c^{2}$ (see \cite[Thm.~5]{Urakawa79}). 
Furthermore, given any non-commutative compact Lie group $G$, he constructed a continuous family $g_t$ ($t>0$) of left-invariant metrics of constant volume such that $\lim_{t\to0} \lambda_1(G,g_{t})=0$ and $\lim_{t\to\infty } \lambda_1(G,g_{t})=\infty$ (see \cite[Thm.~4]{Urakawa79}). 
In the case where $G=\SU(2)$, this deformation can be taken to occur within the family of Berger metrics on $\SU(2)$; i.e.\ the family of left-invariant metrics $g_{(a,b,c)}$ with at least two of $a,b,c$ equal.

Inspired by \cite{Urakawa79}, B\'erard-Bergery and Bourguignon~\cite{Berard-BergeryBourguignon82} gave a general method to compute the spectrum of a total space of a Riemannian submersion with totally geodesic fibers. 
Indeed, Bettiol and Piccione~\cite{BettiolPiccione13a} used this method to give explicit expressions for the lowest positive eigenvalue of several homogeneous spheres, including three-dimensional Berger spheres. 
Of course, their expression in dimension three coincides with the one in \cite{Urakawa79} and Theorem~\ref{thm1:SUlambda_1}; however, it is important to note that this method cannot be applied to an arbitrary left-invariant metric on $\SU(2)$.

We will also consider the Lie group $\SO(3)$, which is isomorphic to $\SU(2)/\{\pm I_2\}$ and diffeomorphic to the $3$-dimensional real projective space $P^3(\R)$.
The space of left-invariant metrics on $\SO(3)$ is parametrized in the same way as for $\SU(2)$.

\begin{theorem}\label{thm1:SOlambda_1}
Under the assumption $a\geq b\geq c>0$, we have that
\begin{equation*}
\lambda_1(\SO(3),g_{\abc})=
4(b^2+c^2),
\end{equation*}
with multiplicity three if $a>b$, six if $a=b>c$, and nine if $a=b=c$. 
\end{theorem}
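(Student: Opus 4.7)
The plan is to deduce the statement from Theorem~\ref{thm1:SUlambda_1} via the standard covering $\SU(2)\to \SO(3)$ with kernel $\{\pm I_2\}$. The left-invariant metric $g_{\abc}$ on $\SU(2)$ descends to the corresponding left-invariant metric on $\SO(3)$, and $L^2(\SO(3))$ is isometrically identified with the subspace of $L^2(\SU(2))$ consisting of functions invariant under left translation by $-I_2$, with the two Laplace--Beltrami operators corresponding under this identification. By the Peter--Weyl theorem, $L^2(\SU(2))=\bigoplus_{n\geq 0} V_n\otimes V_n^{*}$ where $V_n$ is the $(n+1)$-dimensional irreducible representation of $\SU(2)$; since $-I_2$ acts on $V_n$ as the scalar $(-1)^n$, the decomposition of $L^2(\SO(3))$ keeps precisely the summands with $n$ even.

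Thus $\spec(\SO(3),g_{\abc})$ is obtained from $\spec(\SU(2),g_{\abc})$ by retaining only the contributions from the $V_n$ with $n$ even, so that the eigenvalue $a^2+b^2+c^2$ attributed by Theorem~\ref{thm1:SUlambda_1} to $V_1$ is no longer present. On $V_2$, identified with the complexified adjoint representation with basis $\{X_1,X_2,X_3\}$, the bracket relations $[X_i,X_j]=\pm 2X_k$ give that $-\ad(X_i)^2$ vanishes on $X_i$ and acts as $4\cdot\Id$ on the span of the other two basis vectors. Consequently, the Laplacian $-a^2\ad(X_1)^2-b^2\ad(X_2)^2-c^2\ad(X_3)^2$ is diagonal in this basis with eigenvalues $4(b^2+c^2)$, $4(a^2+c^2)$, $4(a^2+b^2)$, each appearing with multiplicity $\dim V_2=3$ in the Peter--Weyl decomposition. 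Under $a\geq b\geq c$, the smallest of these is $4(b^2+c^2)$, and once one knows that every eigenvalue coming from $V_n$ with $n\geq 4$ strictly exceeds it, one concludes $\lambda_1(\SO(3),g_{\abc})=4(b^2+c^2)$.

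Finally, the multiplicity is obtained by counting how many of $4(b^2+c^2)$, $4(a^2+c^2)$, $4(a^2+b^2)$ equal $4(b^2+c^2)$: exactly one if $a>b$, two if $a=b>c$, and three if $a=b=c$. Multiplying by $\dim V_2=3$ yields the claimed multiplicities $3$, $6$, and $9$. The step where I anticipate the most care is the exclusion of competing eigenvalues from $V_n$ with $n\geq 4$: the naive Casimir lower bound $c^2 n(n+2)$ can fail to exceed $4(b^2+c^2)$ when $b\gg c$ (already for $n=4$ it demands $b<\sqrt{5}\,c$), so the argument must rely on the finer spectral analysis of the Laplacian on $V_n$ carried out in the proofs of Theorems~\ref{thm1:SUlambda_1} and~\ref{thm3:nexteigenvalues}.
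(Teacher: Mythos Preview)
Your approach is correct and essentially the same as the paper's. The precise tool completing your final step is Lemma~\ref{lem3:lowerboundCasimireigenvalues}, whose Gershgorin bound $\lambda\ge 2kb^2+k^2c^2$ yields, for even $k\ge 4$, the strict inequality $\lambda\ge 8b^2+16c^2>4(b^2+c^2)$ that you need; the paper's own proof of this theorem cites that lemma directly rather than routing through Theorems~\ref{thm1:SUlambda_1} and~\ref{thm3:nexteigenvalues}.
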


Furthermore, Proposition~\ref{prop3:specBerger} gives a complete description of the spectrum of any Berger sphere $(\SU(2),g_{(a,b,b)})$ and any $(\SO(3),g_{(a,b,b)})$ for all positive numbers $a,b$.

We now derive several applications of the above expressions to different problems in global analysis. 
The first one deals with estimates for some homogeneous Riemannian manifolds of the smallest non-zero eigenvalue of the Laplace--Beltrami operator in terms of the diameter. 
The second application will show that any metric on $\SU(2)$ or $\SO(3)$ is uniquely determined by its spectrum among all left-invariant metrics. 
The last applications concern the problem of multiple solutions of the Yamabe problem in conformal classes of homogeneous metrics on $S^3$.

\subsection{Applications to estimates} \label{subsec1:estimates}
It is very desirable to have good estimates for $\lambda_1(M,g)$ in terms of geometric quantities. 
We restrict our attention to estimates of the scale-invariant term $\lambda_1(M,g) \op{diam}(M,g)^{2}$, where $\op{diam}(M,g)$ denotes the diameter of $(M,g)$.
Significant contributions to this problem were made by Lichnerowicz~\cite{Lichnerowicz}, Obata~\cite{Obata62}, Li and Yau~\cite{LiYau80}, and Zhong and Yang~\cite{ZhongYang84}.

A common assumption in the above contributions is a lower bound for the Ricci curvature of $(M,g)$. 
Peter Li~\cite{Li80} proved\footnote{During the review process, the author found the recent article \cite{JudgeLyons17} by Judge and Lyons, which improves the lower bound in \eqref{eq1:Li-estimate}.} for any homogeneous compact Riemannian manifold $(M,g)$ that
\begin{equation}\label{eq1:Li-estimate}
\lambda_1(M,g) \geq \frac{\pi^2/4}{\op{diam}(M,g)^2}.
\end{equation} 
In other words, the map $g\mapsto \lambda_1(M,g) \op{diam}(M,g)^{2}$ is bounded from below by $\pi^2/4$ when we restric $g$ to the space of homogeneous metrics on a fixed manifold $M$. 
Moreover, this lower bound is universal in the sense that it does not depend on $M$. 
Concerning upper bounds, 
Eldredge, Gordina and Saloff-Coste recently made the following conjecture (see \cite[(1.2)]{EldredgeGordinaSaloff17}).

\begin{conjecture}[Eldredge, Gordina and Saloff-Coste] \label{conj:EGS}
For each compact Lie group $G$, there is a positive real number $C_G$ such that, for any left-invariant metric $g$ on $G$, one has 
\begin{equation}\label{eq1:EGSconj}
\lambda_1(G,g)  \leq \frac{C_G}{\op{diam}(G,g)^2}.
\end{equation}
\end{conjecture}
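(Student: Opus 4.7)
The plan is to verify Conjecture~\ref{conj:EGS} in the special cases $G=\SU(2)$ and $G=\SO(3)$ treated in this paper, by combining the explicit formulas of Theorems~\ref{thm1:SUlambda_1} and~\ref{thm1:SOlambda_1} with an elementary upper bound on the diameter. Under the ordering $a\geq b\geq c>0$, Theorem~\ref{thm1:SUlambda_1} immediately yields $\lambda_{1}(\SU(2),g_{\abc})\leq 4(b^{2}+c^{2})\leq 8b^{2}$, so the conjecture for $\SU(2)$ reduces to exhibiting a constant $K$ with $\op{diam}(\SU(2),g_{\abc})\leq K/b$ independent of $(a,b,c)$; multiplying then yields $\lambda_{1}\cdot\op{diam}^{2}\leq 8K^{2}$.

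The decisive point is that the diameter bound must involve $1/b$, not the a priori larger $1/c$. To this end I would use an Euler-type decomposition employing only the two generators with smallest $g_{\abc}$-norm, namely $X_{1}$ and $X_{2}$ (since $\|X_{1}\|_{g_{\abc}}=1/a\leq 1/b=\|X_{2}\|_{g_{\abc}}\leq 1/c=\|X_{3}\|_{g_{\abc}}$). Concretely, one checks that the map
\begin{equation*}
\mu\colon [0,2\pi]^{3}\longrightarrow \SU(2),\qquad (\alpha,\beta,\gamma)\longmapsto \exp(\alpha X_{1})\exp(\beta X_{2})\exp(\gamma X_{1}),
\end{equation*}
is surjective (the classical $\SU(2)$ Euler-angle parametrization adapts to any pair of linearly independent generators, and each $\exp(tX_{j})$ is $2\pi$-periodic under the paper's normalization of $X_{1},X_{2},X_{3}$). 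Concatenating the three one-parameter-subgroup arcs gives a piecewise-geodesic path from the identity to a given $g\in\SU(2)$ whose $g_{\abc}$-length equals $(|\alpha|+|\gamma|)/a+|\beta|/b\leq 4\pi/a+2\pi/b\leq 6\pi/b$, using $a\geq b$. Hence $K=6\pi$ works and $C_{\SU(2)}\leq 288\pi^{2}$. For $\SO(3)=\SU(2)/\{\pm I_{2}\}$, the covering projection is a local isometry, so $\op{diam}(\SO(3),g_{\abc})\leq \op{diam}(\SU(2),g_{\abc})$; together with Theorem~\ref{thm1:SOlambda_1}, the identical argument yields the conjecture for $\SO(3)$.

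The main obstacle is establishing the surjectivity of $\mu$ with parameters confined to the compact box $[0,2\pi]^{3}$ while using only $X_{1}$ and $X_{2}$, rather than the classical $X_{3}X_{2}X_{3}$ Euler decomposition; the latter is useless here because any $X_{3}$ factor would insert an unacceptable $1/c$ into the length. The surjectivity can be verified by an explicit computation in the matrix model $g=\begin{pmatrix}z&w\\-\bar w&\bar z\end{pmatrix}$ with $|z|^{2}+|w|^{2}=1$ of a general element of $\SU(2)$: parametrize $z$ and $w$ by Euler-like angles and solve the resulting elementary trigonometric system for $(\alpha,\beta,\gamma)$. Once this is in hand, the conjecture for $\SU(2)$ and $\SO(3)$ reduces to the single multiplication of the two displayed estimates.
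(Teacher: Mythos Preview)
Your proposal is correct and self-contained for the cases $G=\SU(2)$ and $G=\SO(3)$, which is all the paper establishes for this conjecture; the general statement remains open. Your diameter argument via the Euler-type factorization $\exp(\alpha X_1)\exp(\beta X_2)\exp(\gamma X_1)$ is valid: the computation
\[
\mu(\alpha,\beta,\gamma)=\begin{pmatrix} e^{i(\alpha+\gamma)}\cos\beta & e^{i(\alpha-\gamma)}\sin\beta\\ -e^{-i(\alpha-\gamma)}\sin\beta & e^{-i(\alpha+\gamma)}\cos\beta\end{pmatrix}
\]
shows surjectivity onto $\SU(2)$ with $\beta\in[0,\pi/2]$ and $\alpha,\gamma\in[0,2\pi]$, and the length bound $4\pi/a+2\pi/b\le 6\pi/b$ follows since each one-parameter segment has constant speed $1/a$ or $1/b$ in a left-invariant metric.

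The paper takes a different and sharper route for the diameter. Rather than constructing explicit paths, it sandwiches $g_{(a,b,c)}$ between two Berger metrics via the pointwise comparison $g_{(a,b,b)}\le g_{(a,b,c)}\le g_{(b,b,c)}$ and invokes the known closed-form diameters of Berger spheres (Propositions~\ref{prop4:diameterBerger} and~\ref{prop4:diam-acotado-Berger}). This yields the optimal bound $\op{diam}(\SU(2),g_{(a,b,c)})\le \pi/b$, attained when $a=b$, and hence $C_{\SU(2)}\le 8\pi^2$ in Theorem~\ref{thm1:conjSU2SO3}, versus your $288\pi^2$. The trade-off is that your argument is entirely elementary and needs nothing beyond the eigenvalue formulas, whereas the paper's approach imports the nontrivial diameter computations of Podobryaev and Podobryaev--Sachkov but in return produces essentially sharp constants and, simultaneously, the lower bounds in Theorem~\ref{thm1:conjSU2SO3}.
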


They proposed in \cite{EldredgeGordinaSaloff17} a detailed method to prove it. 
This method requires showing the existence of a uniform upper bound of the volume doubling constant among the space of left-invariant metrics. 
Moreover, they proved (among many other things) that this condition holds for $G=\SU(2)$, and therefore the above conjecture holds in this case (see \cite[Thm.~8.5]{EldredgeGordinaSaloff17}).

Notice there is no universal lower bound for the Ricci curvature among the metrics in \eqref{eq1:Li-estimate} and \eqref{eq1:EGSconj}, even after rescaling to constant volume or diameter.   
It might be argued that homogeneity successfully replaces the assumption of a lower bound on the Ricci curvatures.

The next theorem, which is the main goal of this subsection, gives explicit positive bounds for $\lambda_1(G,g) \op{diam}(G,g)^{2}$ valid for every left-invariant metric $g$ on $G=\SU(2)$ and $G=\SO(3)$.

\begin{theorem}\label{thm1:conjSU2SO3}
For every left-invariant metric $g$ on $\SU(2)$ or $\SO(3)$, we have that 
\begin{align}\label{eq1:conjSU(2)}
\frac{\pi^2}{\op{diam}(\SU(2),g)^2} &<\lambda_1(\SU(2),g)  \leq \frac{8\pi^2}{\op{diam}(\SU(2),g)^2},
\\
\label{eq1:conjSO(3)}
\frac{\pi^2}{\op{diam}(\SO(3),g)^2} &<\lambda_1(\SO(3),g)  \leq \frac{(9-4\sqrt{2})\pi^2}{\op{diam}(\SO(3),g)^2}.
\end{align}
\end{theorem}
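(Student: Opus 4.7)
The plan is to combine the explicit expressions for $\lambda_1$ from Theorems~\ref{thm1:SUlambda_1} and~\ref{thm1:SOlambda_1} with two-sided estimates on $\op{diam}(G,g_{(a,b,c)})$. By permutation invariance I assume $a\geq b\geq c>0$. The basic monotonicity principle---if $g_1\leq g_2$ as bilinear forms on $\su(2)$ then $d_{g_1}\leq d_{g_2}$---applied to the sandwich $g_{(a,a,a)}\leq g_{(a,b,c)}\leq g_{(c,c,c)}$ already yields $\pi/a\leq \op{diam}(\SU(2),g_{(a,b,c)})\leq \pi/c$, since $g_{(t,t,t)}$ is a round metric of diameter $\pi/t$.

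For the lower inequality $\lambda_1\op{diam}^2>\pi^2$ in \eqref{eq1:conjSU(2)}, I need a sharp lower bound on the diameter. When $\lambda_1=a^2+b^2+c^2$ (equivalently $a^2\leq 3(b^2+c^2)$), the sandwich bound $\op{diam}\geq\pi/a$ already gives $\lambda_1\op{diam}^2\geq\pi^2(a^2+b^2+c^2)/a^2>\pi^2$, with the strict inequality coming from $b,c>0$. When $\lambda_1=4(b^2+c^2)$ this estimate is too weak, and I would instead use the comparison $g_{(a,b,c)}\geq g_{(a,b,b)}$ (valid since $c\leq b$) together with the fact that the Hopf projection $\SU(2)\to\SU(2)/\exp(\R X_1)$ is a Riemannian submersion with respect to the Berger metric $g_{(a,b,b)}$, with base isometric to the round $2$-sphere of radius $1/(2b)$. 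This yields $\op{diam}(\SU(2),g_{(a,b,c)})\geq\pi/(2b)$, whence $\lambda_1\op{diam}^2\geq 4(b^2+c^2)\pi^2/(4b^2)=\pi^2(1+c^2/b^2)>\pi^2$.

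For the upper inequality $\lambda_1\op{diam}^2\leq 8\pi^2$, I need a sharp upper estimate on the diameter. Left-invariance ensures that any product decomposition $h=\exp(Y_1)\cdots\exp(Y_k)$ gives $d(e,h)\leq\sum_i\|Y_i\|_{g_{(a,b,c)}}$. Two basic decompositions arise: the straight exponential $h=\exp(X)$ with $\|X\|_{\mathrm{biinv}}\leq\pi$ gives $d(e,h)\leq\pi/c$, while the Euler-type $h=\exp(\alpha X_1)\exp(\beta X_2)\exp(\gamma X_1)$ with $|\alpha|,|\beta|,|\gamma|\leq\pi$ gives $d(e,h)\leq 2\pi/a+\pi/b$. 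Taking the minimum of these two bounds and performing a case analysis matched to whether $\lambda_1=a^2+b^2+c^2$ or $\lambda_1=4(b^2+c^2)$, and in the Berger subcase $b=c$ invoking the explicit diameter formula available from the integrable geodesic flow (via the Clairaut relation arising from the $S^1$-symmetry), one obtains $\op{diam}\leq\sqrt{8}\,\pi/\sqrt{\lambda_1}$. I expect the main technical obstacle to be precisely this sharp upper estimate on the diameter in the asymmetric case $b>c$, where no clean Riemannian submersion to $S^2$ is available; here a monotone reduction to a nearby Berger metric appears to be the most tractable route.

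For $\SO(3)=\SU(2)/\{\pm I\}$, distances satisfy $d_{\SO(3)}([x],[y])=\min\{d_{\SU(2)}(x,y),\,d_{\SU(2)}(x,-y)\}$. Since $-I=\exp(\pi X_1)$ satisfies $d_{\SU(2)}(e,-I)\leq\pi/a$, the $\SO(3)$-diameter is strictly smaller than $\op{diam}(\SU(2),g_{(a,b,c)})$ by an amount controlled by $\pi/a$. Combined with Theorem~\ref{thm1:SOlambda_1}'s identity $\lambda_1(\SO(3),g_{(a,b,c)})=4(b^2+c^2)$ and the same diameter estimates as for $\SU(2)$, this produces \eqref{eq1:conjSO(3)}; the precise constant $9-4\sqrt{2}$ arises from a one-variable optimization balancing the ``fiber'' contribution $\pi/a$ against the quotient $S^2$-diameter $\pi/(2b)$ at the extremal Berger parameter.
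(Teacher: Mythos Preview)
Your lower bound for $\SU(2)$ is correct and in fact tidier than the paper's: the paper splits into three cases (according to the position of $a^2$ relative to $2b^2$ and $3(b^2+c^2)$) and uses the refined Berger lower bound $\op{diam}^2\geq \pi^2/(4b^2(1-b^2/a^2))$ from Podobryaev's formula, whereas your two-case argument with $\op{diam}\geq\pi/a$ and $\op{diam}\geq\pi/(2b)$ (via the Hopf submersion) already yields $\lambda_1\op{diam}^2>\pi^2$ directly.

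The genuine gap is in the upper bounds. For $\SU(2)$, neither of your two diameter estimates suffices: the bound $\op{diam}\leq\pi/c$ gives $\lambda_1\op{diam}^2\leq 4(b^2+c^2)\pi^2/c^2$, which blows up as $c\to 0$; and the Euler-angle bound $\op{diam}\leq 2\pi/a+\pi/b$ gives, even in the regime $a^2>3(b^2+c^2)$ where $a>\sqrt{3}\,b$, only $\lambda_1\op{diam}^2\leq 8b^2(2/\sqrt{3}+1)^2\pi^2\approx 37\pi^2$. No combination of these two closes the gap to $8\pi^2$. What the paper actually uses is the monotone comparison to the \emph{other} Berger metric, namely $g_{(a,b,c)}\leq g_{(b,b,c)}$ (since $a\geq b$ implies $1/a^2\leq 1/b^2$), together with Podobryaev's exact formula $\op{diam}(\SU(2),g_{(b,b,c)})=\pi/b$ for all $b\geq c$. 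This single line gives $\op{diam}(\SU(2),g_{(a,b,c)})\leq\pi/b$, and since $\lambda_1\leq 4(b^2+c^2)\leq 8b^2$ always, the bound $8\pi^2$ follows immediately. Your proposal mentions ``monotone reduction to a nearby Berger metric'' but points to $g_{(a,b,b)}$ (useful only for lower bounds) rather than $g_{(b,b,c)}$, and does not invoke the exact Berger diameter, which is the decisive input.

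For $\SO(3)$ your description of the constant $9-4\sqrt{2}$ is incorrect. It has nothing to do with balancing a fiber length $\pi/a$ against a base diameter $\pi/(2b)$; the parameter $a$ does not enter at all. The paper again compares $g_{(a,b,c)}\leq g_{(b,b,c)}$ and uses the Podobryaev--Sachkov formula for $\op{diam}(\SO(3),g_{(b,b,c)})$, which is $\pi/(2c)$ when $c\geq b/\sqrt{2}$ and $(\pi/b)\sqrt{1+\tfrac{1}{4(c^2/b^2-1)}}$ when $c<b/\sqrt{2}$. The constant $9-4\sqrt{2}$ is the maximum of $4(b^2+c^2)\op{diam}(\SO(3),g_{(b,b,c)})^2/\pi^2$ as a function of $x=c^2/b^2\in(0,1/2]$, attained at $x=1-1/\sqrt{2}$. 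Your quotient relation $d_{\SO(3)}([x],[y])=\min\{d_{\SU(2)}(x,y),d_{\SU(2)}(x,-y)\}$ is true but by itself only yields $\op{diam}(\SO(3))\leq\op{diam}(\SU(2))$, which is too weak.
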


Notice $3.343 <9-4\sqrt{2}<3.344$.
An explicit expression for the diameter of $(\SU(2),g_{\abc})$ is known only for Berger spheres (see Proposition~\ref{prop4:diameterBerger}). 
This expression was a fundamental tool to prove the previous estimates. 
It might be hoped that an extrema of the function
\begin{equation}
g\mapsto\lambda_1(\SU(2),g)\op{diam}(\SU(2),g)^2
\end{equation}
over the space of left-invariant metrics on $\SU(2)$ is attained by a Berger sphere. 
In this case, Corollary~\ref{cor4:BergerSU} would give sharp estimates since, for every Berger sphere, it yields
\begin{equation}
\frac{(1+\sqrt{3}/2)\pi^2}{\op{diam}(\SU(2),g)^2} \leq \lambda_1(\SU(2),g)   \leq \frac{3\pi^2}{\op{diam}(\SU(2),g)^2}.
\end{equation}

Despite the existence of the uniform lower bound in \eqref{eq1:Li-estimate}, Example~\ref{ex4:no-unif-upper-bound} shows that the constant $C_G$ in Conjecture~\ref{conj:EGS} cannot be uniform for every $G$. 
That is, $C_G$ is not bounded from above for every compact Lie group $G$. 
The elementary example uses Proposition~\ref{prop4:estimate-product}, which estimates the term $\lambda_1(G,g) \op{diam}(G,g)^2$ for every Riemannian manifold $(G,g)$ given by a direct product of Riemannian manifolds, where each copy is isometric to a left-invariant metric on $\SU(2)$ or $\SO(3)$. 
It is important to note that the set of metrics $g$ covered above is strictly included in the space of left-invariant metrics on $G$ (see for instance \cite[\S2]{NikonorovRodionov03} for the space of left-invariant metrics on $\SU(2)\times\SU(2)$).


\subsection{Application to inverse spectral geometry} \label{subsec1:rigidity}

The next result is the second main consequence of Theorems~\ref{thm1:SUlambda_1} and \ref{thm1:SOlambda_1}, namely, the global spectral rigidity of left-invariant metrics on $\SU(2)$ and on $\SO(3)$. 
This theorem was already proved by Schmidt and Sutton~\cite{SchmidtSutton13}\footnote{At the moment, this article can be found in Schmidt's web page and dates from October 16th, 2014. By a personal communication with C.~Sutton, there will be soon a version of this preprint in arxiv with an extended main result.}. 
They used the analytic machinery of heat invariants.

\begin{theorem}\label{thm1:unicidad}
Let $G$ be either $\SU(2)$ or $\SO(3)$. 
The spectrum of the Laplace--Beltrami operator distinguishes up to isometry any left-invariant metric on $G$ within the space of left-invariant metrics on $G$. 
Equivalently, two isospectral left-invariant metrics on $G$ are isometric. 
\end{theorem}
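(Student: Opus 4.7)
The plan is to recover the triple $(a,b,c)$ up to permutation---equivalently, the isometry class of $g_{(a,b,c)}$---from the spectrum. Let $g=g_{(a,b,c)}$ and $g'=g_{(a',b',c')}$ be isospectral left-invariant metrics on $G\in\{\SU(2),\SO(3)\}$, normalized so that $a\geq b\geq c>0$ and $a'\geq b'\geq c'>0$; since permutations of the parameters give isometric metrics, it suffices to prove the ordered triples agree.

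First I would extract the volume from Weyl's law. A direct computation from \eqref{eq2:g_matrix} yields $\operatorname{vol}(G,g_{(a,b,c)})$ equal to a fixed positive constant divided by $abc$, so isospectrality gives $abc = a'b'c'$. Next I would apply Theorems~\ref{thm1:SUlambda_1} and~\ref{thm1:SOlambda_1}. For $\SO(3)$, the identity $\lambda_1=4(b^2+c^2)$ holds unconditionally, so its value gives $b^2+c^2=b'^2+c'^2$, while its multiplicity (three, six, or nine) forces the pattern of equalities among $(a,b,c)$ to mirror that among $(a',b',c')$. For $\SU(2)$, the multiplicity of $\lambda_1$ (three, four, or seven) forces both triples into the same branch of the minimum appearing in Theorem~\ref{thm1:SUlambda_1}, producing either $a^2+b^2+c^2=a'^2+b'^2+c'^2$, or $b^2+c^2=b'^2+c'^2$, or both.

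To close the argument I would use one further spectral invariant---either the second distinct eigenvalue furnished by Theorem~\ref{thm3:nexteigenvalues}, or the Minakshisundaram--Pleijel coefficient $a_1(g)=\tfrac{1}{6}\int_G s_g\,dV_g$. The latter is tractable because the scalar curvature of $g_{(a,b,c)}$ is constant and, via Milnor's formula for left-invariant metrics on three-dimensional Lie groups, is an explicit symmetric polynomial in $(a^2,b^2,c^2)$. Combining this new identity with the preceding two steps yields three algebraically independent symmetric expressions in $(a^2,b^2,c^2)$ matched to those of $(a'^2,b'^2,c'^2)$, which forces the elementary symmetric polynomials $e_1$, $e_2$, $e_3$ to coincide and hence determines $\{a^2,b^2,c^2\}$ as a multiset; together with the imposed ordering this yields $(a,b,c)=(a',b',c')$. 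The $\SO(3)$ case can alternatively be handled directly from the $\SU(2)$ analysis via the inclusion $\spec(\SO(3),g_{(a,b,c)})\subset\spec(\SU(2),g_{(a,b,c)})$ coming from the double cover.

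The hard part is the case analysis on $\SU(2)$ when $\lambda_1=4(b^2+c^2)$, since then $\lambda_1$ does not detect $a$ directly and this parameter must be recovered through the third invariant. One must verify in each sub-case (distinguished by strict versus non-strict inequalities among $a,b,c$) that the resulting polynomial system has only the solution $(a,b,c)=(a',b',c')$; in the degenerate sub-cases where two parameters coincide, Proposition~\ref{prop3:specBerger} (giving the full spectrum of Berger spheres) can be invoked to supply as many extra constraints as needed.
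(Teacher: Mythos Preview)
Your strategy---volume, scalar curvature, and $\lambda_1$ together with its multiplicity---is exactly the one the paper uses, and your treatment of the case $\lambda_1=a^2+b^2+c^2$ is essentially correct: there one really does recover $e_1$, $e_3$, and (via the scalar curvature formula) $e_2$, and the symmetric-polynomial argument finishes it. The paper does the same thing by an explicit cubic factorization $(x^2-a^2)(x^2-b^2)(x^2-c^2)=0$.

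The gap is in the case $a^2>3(b^2+c^2)$, where $\lambda_1=4(b^2+c^2)$. Your sentence ``combining this new identity with the preceding two steps yields three algebraically independent symmetric expressions in $(a^2,b^2,c^2)$'' is simply false here: $b^2+c^2$ is \emph{not} a symmetric function of $(a^2,b^2,c^2)$, so you cannot reduce to the elementary symmetric polynomials. What you actually have is $abc=a'b'c'$, $b^2+c^2=b'^2+c'^2$, and one relation from the scalar curvature; the paper shows that eliminating variables leads to $(x^2-a^2)\,g(x)=0$ with $g(t)=t^6(b^2+c^2)^2+t^4a^2(b^2-c^2)^2-b^4c^4(t^2+a^2)$, and then rules out roots of $g$ by a nontrivial calculus argument (checking $g(\sqrt[3]{abc})>0$ to force $x^3<abc$, contradicting the volume identity). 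This is precisely the ``verification'' you defer, and it is the substantive content of the proof---it does not follow from any general symmetric-function principle. Two smaller points: Theorem~\ref{thm3:nexteigenvalues} does not give you a usable formula for $\mu_2$ in this regime, so that alternative does not help; and your proposed shortcut for $\SO(3)$ via the double cover fails as stated, since the inclusion $\spec(\SO(3),g)\subset\spec(\SU(2),g)$ does not let you deduce isospectrality upstairs from isospectrality downstairs.
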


The proof uses two classical spectral invariants (volume and total scalar curvature) plus the smallest non-zero eigenvalue with its multiplicity given in Theorems~\ref{thm1:SUlambda_1} and \ref{thm1:SOlambda_1}.


\subsection{Application to multiple solutions of the Yamabe problem} \label{subsec1:Yamabe}
We conclude the article with an application of Theorem~\ref{thm1:SUlambda_1} to a problem related with certain solutions of the Yamabe problem on $S^3$. 
This application slightly extends a previous result by Bettiol and Piccione~\cite{BettiolPiccione13a}.
We prove that (up to scaling) within the conformal class of a homogeneous metric $g$ of non-constant sectional curvature on $S^3$, all other metrics of constant scalar curvature must be sufficiently ``far'' from $g$.

The Yamabe problem, which is already solved, predicts for a given compact manifold $M$ of dimension $\geq3$ the existence of a constant scalar curvature metric in each conformal class of metrics on $M$. 
A constant scalar curvature metric is characterized for being a critical point of the Hilbert--Einstein functional restricted to its conformal class.
The Yamabe problem was solved by showing that this functional attains a minimum in each conformal class. 
See \cite[\S1]{LeeParker87} for details and references on this subject.

Anderson~\cite{Anderson05} proved that, generically, there is a unique constant scalar curvature metric (up to scaling) in each conformal class. 
In contrast, the solutions of the Yamabe Problem within the conformal class of the round metric on $S^m$, $m\geq3$, forms an $(m+1)$-dimensional manifold. 
In particular, modulo scaling, the round metric is not an isolated solution of the Yamabe Problem in its conformal class.

Bettiol and Piccione~\cite{BettiolPiccione13a} studied the problem of existence or non-existence of multiple solutions of the Yamabe problem for the homogeneous metrics on spheres (see also \cite{BettiolPiccione13b} for a more general setting). 
Of course, homogeneous metrics are solutions of the Yamabe problem since they have constant scalar curvature.
For dimension $3$, the case of interest in this article, Bettiol and Piccione proved the following result in the case of any non-round Berger sphere (see \cite[Thm.~5.4]{BettiolPiccione13a}).

\begin{theorem}\label{thm1:Yamabe}
Let $g$ be either a left-invariant metric on $\SU(2)$ with non-constant sectional curvature or any left-invariant metric on $\SO(3)$.
Then, there is a neighborhood of $g$ inside the conformal class $[g]$ containing (up to scaling) no other constant scalar curvature metric. 
\end{theorem}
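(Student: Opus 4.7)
The plan is to verify the nondegeneracy criterion that Bettiol and Piccione use in \cite{BettiolPiccione13a} for the Berger subfamily. For a closed 3-dimensional Riemannian manifold $(M,g)$ of constant scalar curvature, linearizing the conformal Yamabe equation $8\Delta_g u+\op{scal}(g)u=\mu u^{5}$ at the trivial solution $u\equiv 1$ and restricting to the $L^{2}$-orthogonal complement of the constants produces the Jacobi operator $\phi\mapsto 8\Delta_g\phi-4\op{scal}(g)\phi$. Its kernel consists of non-constant $\Delta_g$-eigenfunctions with eigenvalue $S:=\op{scal}(g)/2$, so the implicit function theorem ensures that $g$ is an isolated critical point of the Hilbert--Einstein functional on $[g]$ modulo scaling as soon as $S\notin\spec(M,g)\setminus\{0\}$. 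Since the positive part of the spectrum begins at $\lambda_1(M,g)$, it suffices to establish the strict inequality $S<\lambda_1(M,g)$.

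First I would compute $S$ using Milnor's formula for the scalar curvature of a three-dimensional unimodular Lie group, applied to the $g_{\abc}$-orthonormal frame $\{aX_{1},bX_{2},cX_{3}\}$. A direct calculation gives
\begin{equation*}
S \;=\; 2(a^{2}+b^{2}+c^{2})\;-\;\left(\frac{b^{2}c^{2}}{a^{2}}+\frac{c^{2}a^{2}}{b^{2}}+\frac{a^{2}b^{2}}{c^{2}}\right).
\end{equation*}
A sanity check at $a=b=c$ yields $S=3a^{2}=\lambda_{1}(\SU(2),g_{(a,a,a)})$, reflecting the expected degeneracy at the round metric.

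The crux of the argument is then the pair of elementary inequalities, assumed under $a\geq b\geq c>0$: namely $S\leq a^{2}+b^{2}+c^{2}$ (with equality iff $a=b=c$) and $S<4(b^{2}+c^{2})$. Setting $(x,y,z)=(a^{2},b^{2},c^{2})$, the first reduces to $xyz(x+y+z)\leq(xy)^{2}+(yz)^{2}+(zx)^{2}$, which is equivalent to the evident $\sum_{\mathrm{cyc}}(xy-yz)^{2}\geq 0$. The second rearranges to $2a^{2}-2(b^{2}+c^{2})<\tfrac{b^{2}c^{2}}{a^{2}}+\tfrac{c^{2}a^{2}}{b^{2}}+\tfrac{a^{2}b^{2}}{c^{2}}$; it is trivial when $a^{2}\leq b^{2}+c^{2}$, and in the regime $a^{2}>b^{2}+c^{2}$ it follows from AM--GM, since $\tfrac{c^{2}a^{2}}{b^{2}}+\tfrac{a^{2}b^{2}}{c^{2}}\geq 2a^{2}$ while the remaining term $\tfrac{b^{2}c^{2}}{a^{2}}$ is strictly positive.

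Combining these bounds with Theorems~\ref{thm1:SUlambda_1} and \ref{thm1:SOlambda_1} gives $S<\lambda_{1}(\SU(2),g_{\abc})$ whenever $(a,b,c)$ is non-round, and $S<\lambda_{1}(\SO(3),g_{\abc})$ for every $(a,b,c)$. This is precisely what the nondegeneracy criterion requires, completing the proof. The main obstacle is really the scalar curvature computation together with the two inequalities above; once these are in place, Theorems~\ref{thm1:SUlambda_1} and \ref{thm1:SOlambda_1} deliver the conclusion essentially for free.
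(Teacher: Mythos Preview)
Your proposal is correct and follows essentially the same route as the paper: you establish the two inequalities $\op{Scal}/2\leq a^{2}+b^{2}+c^{2}$ (with equality only at the round metric) and $\op{Scal}/2<4(b^{2}+c^{2})$, then combine them with Theorems~\ref{thm1:SUlambda_1} and \ref{thm1:SOlambda_1} and invoke the Bettiol--Piccione nondegeneracy criterion. The paper packages the two inequalities as Propositions~\ref{prop6:SUScalcondition} and \ref{prop6:SOScalcondition}, citing Muirhead for the first and the same AM--GM step for the second; your sum-of-squares argument for the first is equivalent, and your direct use of $\lambda_1(\SO(3),g_{\abc})=4(b^{2}+c^{2})$ for the $\SO(3)$ case is marginally more direct than the paper's comparison with $\lambda_1(\SU(2),g_{\abc})$.
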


\medskip

The article is organized as follows. 
Section~\ref{sec:preliminaries} recalls well-known spectral and geometric properties of left-invariant metrics on compact Lie groups. 
The next section includes the main results on the partial description of the spectrum of the Laplace--Beltrami operator on $\SU(2)$ or $\SO(3)$ endowed with a left-invariant metric. 
Sections~\ref{sec:estimates}, \ref{sec:uniquiness} and \ref{sec:Yamabe} considers the applications described in Subsections~\ref{subsec1:estimates}, \ref{subsec1:rigidity} and \ref{subsec1:Yamabe} respectively.

\subsection*{Acknowledgments}

The author wishes to express his gratitude
to Juan Pablo Agnelli and Eren U\c{c}ar for drawing the author's attention to the Gershgorin Circle Theorem (used in  Lemma~\ref{lem3:lowerboundCasimireigenvalues}) and the Muirhead's inequality (used in Proposition~\ref{prop6:SUScalcondition}) respectively,
to Nathaniel Eldredge for several helpful comments on the diameter of a left-invariant metric,
to Guillermo Henry for a stimulating conversation on the Yamabe problem,
and specially to Dorothee Sch\"uth for many fruitful discussions on several points in the article.
The author is greatly indebted to the referee for a very careful reading and many helpful suggestions.

\section{Preliminaries}\label{sec:preliminaries}

In this section, we review well-known facts.
We begin by recalling an abstract description of the spectrum of the Laplace--Beltrami operator on a compact connected semisimple Lie group endowed with a left-invariant metric.
We conclude the section with a description of the space of left-invariant metrics on $3$-dimensional non-commutative compact Lie groups (e.g.,\ $\SU(2)$ and $\SO(3)$).
We will also recall some well-known facts of their geometry.

\subsection{Spectra of left-invariant metrics} \label{subsec:specleft-inv}
The material in this subsection is extracted from \cite[\S2]{Lauret-globalrigid}, which is based on \cite[\S2 and \S3]{Urakawa79}.

Let $G$ be a compact connected semisimple Lie group of dimension $m$. 
It is well known that left-invariant metrics on $G$ are in correspondence with inner products on the Lie algebra $\mathfrak g$ of $G$. 
We fix an $\op{Ad}(G)$-invariant inner product $\langle \cdot,\cdot\rangle_I$ on $\mathfrak g$ (e.g.\ a negative multiple of the Killing form).
Let $\{X_1,\dots,X_m\}$ be an orthonormal basis of $\mathfrak g$ with respect to $\langle \cdot,\cdot \rangle_I$. 
For $A=(a_{i,j})\in\GL(m,\R)$, we denote by $\langle \cdot,\cdot\rangle_A$ the inner product on $\mathfrak g$ satisfying that $\{Y_1,\dots,Y_m\}$ is an orthonormal basis, where $Y_j=\sum_{i=1}^{m}a_{i,j}X_i$ for any $j$. 
One can check that $\langle \cdot,\cdot \rangle_{A} = \langle \cdot,\cdot \rangle_{B}$ if and only if $A=BP$ for some $P\in\Ot(m)$. 
Consequently, the space of left-invariant metric on $G$ is in correspondence with $\GL(m,\R)/\Ot(m)$. 
For $A\in\GL(m,\R)$, we denote by $(G,g_A)$ the Riemannian manifold $G$ endowed with the left-invariant metric $g_A$ associated to $A$.

Let $(R,L^2(G))$ be the right regular representation of $G$, that is, for each $g\in G$, $R_g:L^2(G)\to L^2(G)$ is unitary given by $f\mapsto (R_g\cdot f)(x)=f(xg)$ for $x\in G$ and $f\in L^2(G)$. 
The Peter-Weyl Theorem ensures the equivalence 
\begin{equation}\label{eq2:PeterWeyl}
L^2(G)\simeq \bigoplus_{\pi\in \widehat G} V_\pi\otimes V_\pi^*
\end{equation}
as $G$-modules.
Here, $\widehat G$ denotes the unitary dual of $G$, the action of $G$ on $V_\pi\otimes V_\pi^*$ is given by $g\cdot (v\otimes\varphi )= v\otimes (\pi^*(g)\varphi)$ (note that this action was given incorrectly in \cite[\S2]{Lauret-globalrigid}), and the embedding $V_\pi\otimes V_\pi^*\hookrightarrow C^\infty(G)$ is given by $v\otimes \varphi\mapsto f_{v\otimes\varphi}(x) = \varphi(\pi(x)v)$ for  $x\in G$.

Set $C_A=\sum_{j=1}^m Y_j^2 \in U(\mathfrak g_\C)$, where $\{Y_1,\dots,Y_m\}$ is any orthonormal basis of $\mathfrak g$ with respect to $\langle\cdot,\cdot\rangle_A$.
One can check that this does not depend on the basis. 
Here, $U(\mathfrak g_\C)$ stands for the universal enveloping algebra associated to $\mathfrak g_\C:=\mathfrak g\otimes_\R\C$. 
We call $C_A$ the Casimir element associated to $\langle\cdot,\cdot \rangle_A$.
We use $\pi$ to denote also the induced representations of $\mathfrak g$, $\mathfrak g_\C$ and $U(\mathfrak g_\C)$. 

Let $\Delta_A$ be the Laplace--Beltrami operator of $(G,g_A)$. 
One has that (c.f.\ \cite[Lem.~1]{Urakawa79}) 
\begin{equation}\label{eq2:Laplacian}
\Delta_A\cdot f_{v\otimes\varphi} = f_{(\pi(-C_A)v)\otimes\varphi}.
\end{equation}
Let $v$ be an eigenvector with eigenvalue say $\lambda$ of the finite-dimensional linear operator $\pi(-C_A):V_\pi\to V_\pi$. 
Then, $\Delta_A \cdot f_{v\otimes\varphi} = f_{(\lambda v)\otimes \varphi}=\lambda f_{v\otimes\varphi}$, that is, $f_{v\otimes \varphi}$ is an eigenfunction of $\Delta_A$ with eigenvalue $\lambda$ for any $\varphi\in V_\pi^*$. 
Consequently, if $\lambda_{1}^{\pi,A},\dots, \lambda_{d_\pi}^{\pi,A}$ denote the eigenvalues of the finite-dimensional linear operator $\pi(-C_A):V_\pi\to V_\pi$ and $d_\pi=\dim V_\pi$, then
\begin{equation}\label{eq2:spec_A}
\spec (\Delta_A) = \bigcup_{\pi\in\widehat G} \big\{\!\big\{
\underbrace{\lambda_1^{\pi,A},\dots, \lambda_1^{\pi,A}}_{d_\pi\text{-times}},\dots, \underbrace{\lambda_{d_\pi}^{\pi,A},\dots,\lambda_{d_\pi}^{\pi,A}}_{d_\pi\text{-times}} 
\big\}\!\big\}.
\end{equation}
We thus have the following result.

\begin{proposition}\label{prop2:spec_A}
If $\lambda$ is an eigenvalue of $\Delta_A$, then there are $\pi\in\widehat G$ and $1\leq j\leq d_\pi$ such that $\lambda=\lambda_j^{\pi,A}$. 
Moreover, the multiplicity of an eigenvalue $\lambda$ of $\Delta_{A}$ is given by 
\begin{equation}\label{eq2:multeigenvalue}
\sum_{(\pi,j)\in \mathcal E_\lambda(G,A)} d_\pi, 
\qquad 
\left(\text{where } \mathcal E_\lambda(G,A):=\{(\pi,j): \pi\in\widehat G,\; 1\leq j\leq d_\pi,\; \lambda=\lambda_{j}^{\pi,A}\}\right).
\end{equation}
\end{proposition}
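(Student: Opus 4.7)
The statement is essentially a formalization of the discussion culminating in \eqref{eq2:spec_A}, so my plan is to assemble the Peter--Weyl decomposition with the Casimir calculation into a clean statement, being careful about multiplicities.

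First, I would invoke \eqref{eq2:PeterWeyl} to decompose $L^2(G)$ into isotypic summands $V_\pi\otimes V_\pi^*$, and observe via \eqref{eq2:Laplacian} that $\Delta_A$ preserves each summand, acting there as $\pi(-C_A)\otimes \mathrm{Id}_{V_\pi^*}$. Because $\Delta_A$ is self-adjoint and $V_\pi\otimes V_\pi^*$ is finite-dimensional, the operator $\pi(-C_A)$ must be diagonalizable on $V_\pi$; fix an eigenbasis $\{v_j^\pi\}_{j=1}^{d_\pi}$ with corresponding eigenvalues $\lambda_1^{\pi,A},\dots,\lambda_{d_\pi}^{\pi,A}$.

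Second, for every $\pi\in\widehat G$, every $1\le j\le d_\pi$, and every $\varphi\in V_\pi^*$, the matrix coefficient $f_{v_j^\pi\otimes\varphi}$ is an eigenfunction of $\Delta_A$ with eigenvalue $\lambda_j^{\pi,A}$, by the computation carried out just before \eqref{eq2:spec_A}. Letting $\pi$, $j$ and $\varphi$ vary, these functions span a dense subspace of $L^2(G)$ thanks to \eqref{eq2:PeterWeyl}; therefore they form a complete system of eigenfunctions, and every eigenvalue of $\Delta_A$ must be equal to some $\lambda_j^{\pi,A}$. This gives the first assertion.

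Third, to compute the multiplicity of a fixed eigenvalue $\lambda$, I would use the orthogonality of distinct Peter--Weyl summands and observe that the $\lambda$-eigenspace of $\Delta_A$ intersects $V_\pi\otimes V_\pi^*$ exactly in
\[
\Bigl(\bigoplus_{\substack{1\le j\le d_\pi\\ \lambda_j^{\pi,A}=\lambda}} \C\, v_j^\pi\Bigr)\otimes V_\pi^*,
\]
which has dimension $\#\{j:\lambda_j^{\pi,A}=\lambda\}\cdot d_\pi$. Summing over $\pi\in\widehat G$ yields precisely \eqref{eq2:multeigenvalue}. The only delicate point is making sure the sum is finite, but this is automatic: by \eqref{eq1:spectrum(M,g)} each eigenvalue of $\Delta_A$ has finite multiplicity. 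Since every step is bookkeeping around \eqref{eq2:PeterWeyl} and \eqref{eq2:Laplacian}, I do not expect any real obstacle in executing this plan.
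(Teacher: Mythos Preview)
Your proposal is correct and follows precisely the approach of the paper: the paper does not give a separate proof of this proposition, but simply states it as the formal consequence of the discussion leading to \eqref{eq2:spec_A}, which is exactly the bookkeeping you have written out. Your write-up just makes explicit the diagonalizability of $\pi(-C_A)$ and the orthogonality of Peter--Weyl blocks, which the paper leaves implicit.
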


\subsection{Left-invariant metrics on 3-dimensional compact Lie groups} \label{subsec:left-invmet}
We now review well-known facts about left-invariant metrics on $\SU(2)$ and $\SO(3)$. 
We set 
\begin{align}\label{eq2:X1X2X3}
X_1&= \begin{pmatrix} i&0 \\ 0&-i \end{pmatrix}, &
X_2&= \begin{pmatrix} 0&1 \\ -1&0 \end{pmatrix}, &
X_3&= \begin{pmatrix} 0&i \\ i&0 \end{pmatrix}.
\end{align}
One can easily check that $\{X_1,X_2,X_3\}$ is an orthonormal basis of $\su(2)$ with respect to the $\op{Ad}(\SU(2))$-invariant inner product $\langle X,Y\rangle_I := -\frac18B(X,Y)= -\frac12\op{tr}(XY)$.
Here, $B(\cdot,\cdot)$ denotes the Killing form on $\sll(2,\C)$. 
Furthermore, $[X_1,X_2]=2X_3$, $[X_3,X_1]=2X_2$ and $[X_2,X_3]=2X_1$.

For our purposes, it will be sufficient to consider $\SO(3)$ as $\SU(2)/\{\pm I_2\}$, where $I_2$ denotes the $(2\times2)$-identity matrix. 
We have that $\SU(2)$ is diffeomorphic to the $3$-dimensional sphere $S^3$, and $\SO(3)$ is diffeomorphic to the $3$-dimensional real projective space $P^3(\R)$.

Let $G$ be either $\SU(2)$ or $\SO(3)$, thus $\mathfrak g= \su(2)$. 
For $a,b,c$ positive real numbers, set $g_{\abc}=g_A$ where $A=\diag\abc$, with respect to the basis $\{X_1,X_2,X_3\}$.
Thus, $g_{\abc}$ is the left-invariant metric on $G$ corresponding to the inner product on $\su(2)$ such that $\{aX_1,bX_2,cX_3\}$ is orthonormal.  
Equivalently, the matrix of $g_{\abc}$ with respect to the basis $\{X_1,X_2,X_3\}$ is given by \eqref{eq2:g_matrix}.
Consequently, the associated Casimir element associated to $g_{\abc}$ is given by
\begin{equation}\label{eq2:Casimir}
C_{\abc}:= C_A= a^2X_1^2+b^2 X_2^2+c^2 X_3^2 \in U(\sll(2,\C)). 
\end{equation}

It turns out that it is sufficient to consider only the metrics $g_{\abc}$ (see for instance \cite[\S4]{Milnor76}).

\begin{proposition}\label{prop2:left-invSU(2)}
Let $G$ be either $\SU(2)$ or $\SO(3)$.
Any left-invariant metric on $G$ is isometric to $g_{\abc}$ for some positive real numbers $a$, $b$ and $c$. 
Moreover, any permutation on the parameters $\abc$ does not change the isometry class; thus one can assume $a\geq b\geq c$. 
\end{proposition}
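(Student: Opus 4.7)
The plan is to translate everything into linear algebra on $\mathfrak{g}=\su(2)$ and then exploit the special fact, peculiar to dimension three, that $\op{Aut}(\su(2))=\op{Inn}(\su(2))\cong\SO(3)$, i.e., \emph{every} orthogonal transformation of $(\su(2),\langle\cdot,\cdot\rangle_I)$ of determinant $+1$ is automatically a Lie algebra automorphism. Once we have an automorphism $\varphi$ of $\su(2)$ doing the right thing at the identity, it lifts to an inner automorphism $\Phi$ of $G$ (conjugation by some $g_0\in G$, using $\varphi=\op{Ad}(g_0)$), and left-invariance of both metrics upgrades $\Phi$ from an infinitesimal isometry at $e$ to a global one.

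For the first assertion, I start with an arbitrary left-invariant metric, which corresponds to an arbitrary inner product $g$ on $\su(2)$. Writing $g(X,Y)=\langle PX,Y\rangle_I$ for a unique symmetric positive-definite $P$, the spectral theorem yields a $\langle\cdot,\cdot\rangle_I$-orthonormal eigenbasis $\{Y_1,Y_2,Y_3\}$ with positive eigenvalues, which I label $1/a^2\leq 1/b^2\leq 1/c^2$, so $a\geq b\geq c>0$. Then the linear map $\varphi:X_i\mapsto Y_i$ is orthogonal with respect to $\langle\cdot,\cdot\rangle_I$; if its determinant is $-1$, I replace $Y_1$ by $-Y_1$ (still an eigenvector with the same eigenvalue) to land in $\SO(3)$. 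Because $\op{Aut}(\su(2))=\SO(3)$, this $\varphi$ is a Lie algebra automorphism, so there exists $g_0\in G$ with $\op{Ad}(g_0)=\varphi$, and the inner automorphism $\Phi(x)=g_0 x g_0^{-1}$ has differential $\varphi$ at $e$. A direct check at $e$ shows
\[
g_{\abc}(\varphi(X_i),\varphi(X_j))=g_{\abc}(Y_i,Y_j)\big|_{\text{target}}=\tfrac{1}{a_i^2}\delta_{ij}=g(X_i,X_j),
\]
where I abuse notation to compare the two metrics on the same vectors; combined with the left-invariance of $g$ and $g_{\abc}$ and the fact that $\Phi$ is a group homomorphism, this forces $\Phi^{*}g=g_{\abc}$ at every point of $G$.

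For the permutation invariance, it suffices to exhibit, for each transposition in $S_3$, an involutive automorphism of $\su(2)$ realizing the swap at the level of the basis $\{X_1,X_2,X_3\}$, up to signs that keep the determinant equal to $+1$. For instance $\varphi(X_1)=X_2$, $\varphi(X_2)=X_1$, $\varphi(X_3)=-X_3$ has $\det\varphi=1$ and a quick check against $[X_1,X_2]=2X_3$, $[X_2,X_3]=2X_1$, $[X_3,X_1]=2X_2$ shows it preserves brackets, so it lies in $\op{Aut}(\su(2))$; its lift to $G$ is an isometry from $(G,g_{(b,a,c)})$ to $(G,g_{\abc})$. The analogous sign-adjusted swaps on the other pairs handle the remaining transpositions, and transpositions generate $S_3$.

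The only genuinely subtle point is the orientation issue: a ``naive'' permutation matrix of $\{X_1,X_2,X_3\}$ has determinant $\pm 1$, and when it is $-1$ the resulting linear map, although orthogonal, is \emph{not} an automorphism of $\su(2)$ (it reverses the sign of the structure constants). This is why a sign flip on one basis vector is built into both halves of the argument. Once this is observed, the identification $\op{Aut}(\su(2))\cong\SO(3)$ makes everything work on the nose for $G=\SU(2)$; for $G=\SO(3)$ no extra work is needed because inner automorphisms pass to the quotient $\SU(2)/\{\pm I_2\}$, so the same $\Phi$ descends and remains an isometry.
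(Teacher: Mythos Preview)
Your argument is correct. The key observation you use --- that $\op{Aut}(\su(2))=\op{Inn}(\su(2))\cong\SO(\su(2),\langle\cdot,\cdot\rangle_I)$ --- is exactly what makes the diagonalization step work as an isometry rather than merely as a linear change of coordinates, and your handling of the orientation/sign issue is precisely what is needed to stay inside the automorphism group. One small point: in your displayed equation the roles of $g$ and $g_{\abc}$ are interchanged (you want $(\Phi^*g)(X_i,X_j)=g(\varphi X_i,\varphi X_j)=g(Y_i,Y_j)=a_i^{-2}\delta_{ij}=g_{\abc}(X_i,X_j)$), but you flag the abuse yourself and the content is right.

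As for comparison with the paper: the paper does not actually prove this proposition. It states the result and refers the reader to Milnor~\cite[\S4]{Milnor76}. So you have supplied a self-contained proof where the paper gives only a citation; the argument you give is essentially the one Milnor has in mind (simultaneous diagonalization of the metric against the bi-invariant inner product, using that the orthogonal change of basis can be realized by an inner automorphism in this low-dimensional case).
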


For a general compact Lie group $G$ and a left-invariant metric $g_A$ introduced in the previous section, it is well known that the volume of $(G,g_A)$ depends only on $|\det(A)|$. 
Consequently, for $G=\SU(2)$ or either $\SO(3)$ and for $a,b,c,a',b',c'$ positive real numbers, we have that
\begin{equation}\label{eq2:vol=abc}
\op{vol}(G,g_{\abc})=\op{vol}(G,g_{(a',b',c')})
\quad\text{if and only if}\quad
abc=a'b'c'.
\end{equation}

The next result was proved by Milnor~\cite[Thm.~4.3]{Milnor76}. 
Notice the change of notation with \cite[Thm.~4.3]{Milnor76}: $e_1=aX_1$, $e_2=bX_2$, $e_3=cX_3$, thus $\lambda_1=2bc/a$, $\lambda_2=2ac/b$ and $\lambda_3=2ab/c$.

\begin{proposition}\label{prop2:Ricci+scal}
Let $G$ be either $\SU(2)$ or $\SO(3)$ and let $a,b,c$ be positive real numbers. 
The scalar curvature at any point of $(G,g_{\abc})$ is given by 
\begin{align}\label{eq2:scalarcurvature}
\op{Scal}(G,g_{\abc}) 
  &:= 4(a^2+b^2+c^2)-2\left(\frac{b^2c^2}{a^2} +\frac{a^2c^2}{b^2} +\frac{a^2b^2}{c^2}\right). 
\end{align}
\end{proposition}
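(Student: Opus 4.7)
The plan is to simply reduce the statement to the classical formula of Milnor \cite[Thm.~4.3]{Milnor76}, which computes the principal Ricci curvatures (and hence the scalar curvature) of any left-invariant metric on a $3$-dimensional unimodular Lie group in terms of the structure constants relative to a $g$-orthonormal basis $\{e_1,e_2,e_3\}$ satisfying $[e_i,e_j]=\lambda_k e_k$ for cyclic $(i,j,k)$. Since both $\SU(2)$ and $\SO(3)=\SU(2)/\{\pm I_2\}$ share the Lie algebra $\su(2)$, and the Levi--Civita connection (hence all curvature invariants) is determined locally by the metric Lie algebra, it suffices to argue at the level of $\su(2)$.

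First I would exhibit the appropriate orthonormal frame for $g_{\abc}$: by the definition given in \eqref{eq2:g_matrix}, the vectors $e_1=aX_1$, $e_2=bX_2$, $e_3=cX_3$ form an orthonormal basis of $\su(2)$ with respect to $g_{\abc}$. Next I would read off the structure constants from the relations $[X_1,X_2]=2X_3$, $[X_2,X_3]=2X_1$, $[X_3,X_1]=2X_2$ stated in Subsection~\ref{subsec:left-invmet}: a direct computation gives
\begin{equation*}
[e_2,e_3]=\tfrac{2bc}{a}\,e_1,\qquad [e_3,e_1]=\tfrac{2ac}{b}\,e_2,\qquad [e_1,e_2]=\tfrac{2ab}{c}\,e_3,
\end{equation*}
so in Milnor's notation $\lambda_1=2bc/a$, $\lambda_2=2ac/b$, $\lambda_3=2ab/c$ (matching the change of notation remarked by the author).

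Finally I would substitute these values into Milnor's scalar curvature formula for unimodular $3$-dimensional Lie groups, namely
\begin{equation*}
\op{Scal}=-\tfrac12(\lambda_1^2+\lambda_2^2+\lambda_3^2)+(\lambda_1\lambda_2+\lambda_2\lambda_3+\lambda_3\lambda_1),
\end{equation*}
and observe that the cross terms collapse to $4(a^2+b^2+c^2)$ while the squared terms give exactly $2(b^2c^2/a^2+a^2c^2/b^2+a^2b^2/c^2)$, yielding \eqref{eq2:scalarcurvature}. Homogeneity of $(G,g_{\abc})$ ensures that the scalar curvature is constant, so the value at the identity (computed from the metric Lie algebra data) equals its value at every point.

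There is essentially no obstacle here beyond bookkeeping: the only point requiring care is the factor of $2$ appearing in the brackets of $\su(2)$, which is responsible for the factor of $4$ in $\lambda_i$ (rather than the more common convention where $[X_i,X_j]=X_k$). One could alternatively derive the formula directly from the Koszul formula for the Levi--Civita connection of a left-invariant metric, compute the sectional curvatures of the three coordinate $2$-planes, and sum; but this would just amount to reproving Milnor's theorem in the specific case at hand, so quoting \cite[Thm.~4.3]{Milnor76} is the most economical route.
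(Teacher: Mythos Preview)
Your proposal is correct and follows exactly the route the paper takes: the paper does not give a proof but simply attributes the result to Milnor~\cite[Thm.~4.3]{Milnor76} and records the change of notation $e_i=aX_1,\,bX_2,\,cX_3$ with $\lambda_1=2bc/a$, $\lambda_2=2ac/b$, $\lambda_3=2ab/c$. You have merely made this citation explicit by writing out the substitution into Milnor's scalar curvature formula, which is entirely appropriate.
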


\section{The smallest Laplace eigenvalue}\label{sec:spectra}

In this section, we prove the expressions for the smallest non-zero eigenvalue of the Laplace--Beltrami operator on $\SU(2)$ and $\SO(3)$ endowed with a left-invariant metric. 
Although Proposition~\ref{prop2:spec_A} gives a theoretical expression for every positive eigenvalue, it is not clear which one is the smallest one. 
Lemma~\ref{lem3:lowerboundCasimireigenvalues} in Subsection~\ref{subsec:eigenvalues} gives some relations among them by using the Gershgorin Circle Theorem. 
Subsection~\ref{subsec:expressions} includes the proofs of Theorems~\ref{thm1:SUlambda_1} and \ref{thm1:SOlambda_1} and also Theorem~\ref{thm3:nexteigenvalues} which gives some information about the next small eigenvalues. 
The section ends with a full description of the spectrum of any $3$-dimensional Berger sphere.

\subsection{Eigenvalues of Casimir elements} \label{subsec:eigenvalues}
It follows from Proposition~\ref{prop2:spec_A} that the spectrum $\spec(\SU(2),g_{\abc})$ associated to the left-invariant metric $g_{\abc}$ strongly depends on the eigenvalues of $\pi(-C_{\abc})$ for every irreducible representation $\pi$ of $\SU(2)$ (see Section~\ref{sec:preliminaries} for notation).
We will obtain some lower bounds and relations among them.
Their proofs are simple but quite technical. 
The reader is encouraged to skip this subsection in his/her first reading.

It is well known that, for each non-negative integer $k$, there is up to equivalence exactly one irreducible representation of $\SU(2)$ of dimension $k+1$. 
We denote such a representation by $\pi_k$.
Furthermore, $\pi_k$ descends to a representation of $\SO(3)=\SU(2)/\{\pm I_2\}$ if and only if $k$ is even.
Consequently, a representative set for the unitary dual of $\SO(3)$ is given by $\{\pi_{2k}:k\geq0 \}$. 
In particular, every irreducible representation of $\SO(3)$ has odd dimension.

The representation $\pi_k$ of $\SU(2)$ is realized as the space $V_{\pi_k}$ of complex homogeneous polynomials of degree $k$ in two variables,  with the action given by (see for instance \cite[\S IV.1]{Knapp-book-beyond})
$$
(\pi_{k}(g)\cdot P) (\begin{smallmatrix} z \\ w \end{smallmatrix}) = P(g^{-1} (\begin{smallmatrix} z \\ w \end{smallmatrix})).
$$
Set $P_l(\begin{smallmatrix} z \\ w \end{smallmatrix}) = z^lw^{k-l}$ for every $0 \leq l\leq k$ and let us denote by $\mathcal B$ the ordered basis $\{P_0,\dots,P_k\}$ of $V_{\pi_k}$.
For $v\in U(\sll(2,\C))$, we will write $[\pi_k(v)]$ for the matrix corresponding to the linear transformation $\pi_k(v):V_{\pi_k}\to V_{\pi_k}$ with respect to $\mathcal B$. 

\begin{lemma}\label{lem3:pi_k(C)}
The entries of $\pi_k(-C_{\abc})$ with respect to the basis $\mathcal B$ are given by 
\begin{align*}
[\pi_k(-C_{\abc})]_{j,j} 
  &= (k-2(j-1))^2 a^2 + ((2j-1)k-2(j-1)^2)(b^2+c^2),\\
[\pi_k(-C_{\abc})]_{j-2,j} 
  &= (j-2)(j-1)(b^2-c^2),\\
[\pi_k(-C_{\abc})]_{j+2,j} 
  &= (k-j)(k+1-j)(b^2-c^2),
\end{align*}
for any $1\leq j\leq k+1$, and zero otherwise.
\end{lemma}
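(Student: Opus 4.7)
The proof is a direct computation on the monomial basis $\mathcal B = \{P_l(z,w) = z^l w^{k-l} : 0 \leq l \leq k\}$. I would proceed in three stages. First, differentiate the defining relation $(\pi_k(g) P)(v) = P(g^{-1} v)$ at the identity to obtain $(\pi_k(X) P)(v) = -dP_v(Xv)$ for $X\in\sll(2,\C)$. Substituting the explicit matrices \eqref{eq2:X1X2X3}, so that $X_1 v = (iz,-iw)^t$, $X_2 v = (w,-z)^t$, $X_3 v = (iw,iz)^t$, a short calculation shows that $\pi_k(X_1)$ is diagonal in $\mathcal B$ with eigenvalues $i(k-2l)$, while $\pi_k(X_2)$ and $\pi_k(X_3)$ are tridiagonal raising/lowering operators mapping $P_l$ to linear combinations of $P_{l\pm 1}$; crucially, the coefficients in $\pi_k(X_3)$ differ from those in $\pi_k(X_2)$ only by an extra factor of $-i$.

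Second, I would square each operator. For $X_1$ this is immediate: $\pi_k(X_1^2)P_l = -(k-2l)^2 P_l$. For $X_2^2$ and $X_3^2$, iterating the raising/lowering formulas sends $P_l$ to a linear combination of $P_{l-2}, P_l, P_{l+2}$. The essential observation is that the two squares share the \emph{same} diagonal coefficient (of the form $-(2l(k-l)+k)$), while their off-diagonal contributions to $P_{l\pm2}$ carry \emph{opposite} signs; this is precisely because the composition involves $(-i)^2 = -1$ in $X_3^2$ but $(+1)^2=1$ in $X_2^2$, flipping only the off-diagonal parts while leaving the diagonal invariant.

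Assembling $\pi_k(-C_{\abc}) = -a^2\pi_k(X_1^2) - b^2\pi_k(X_2^2) - c^2\pi_k(X_3^2)$, the diagonal entry collects $a^2(k-2l)^2$ from $X_1^2$ and a $(b^2+c^2)$-multiple of $2l(k-l)+k$ from the equal diagonals of $X_2^2$ and $X_3^2$, while the off-diagonal $P_{l\pm2}$ entries retain only a $(b^2-c^2)$-multiple, since $X_1^2$ contributes nothing off-diagonal and the $b^2$ and $c^2$ pieces combine with opposite signs. Re-indexing by $j = l+1$ converts these coefficients into the stated entries $[\pi_k(-C_{\abc})]_{j,j}$, $[\pi_k(-C_{\abc})]_{j-2,j}$, and $[\pi_k(-C_{\abc})]_{j+2,j}$; all other entries vanish because $C_{\abc}$ shifts each basis vector by at most $\pm 2$ in the index. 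The main (mild) obstacle is disciplined bookkeeping of signs and of the factors of $i$ when composing $\pi_k(X_3)$ with itself; the rest is routine polynomial manipulation, requiring no representation theory beyond the definition of $\pi_k$.
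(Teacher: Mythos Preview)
Your proposal is correct and follows essentially the same route as the paper: differentiate the defining action to get $\pi_k(X_i)$ on $P_l$, square each operator, observe that $\pi_k(X_2)^2$ and $\pi_k(X_3)^2$ have identical diagonal but opposite off-diagonal contributions (so that assembling $-C_{\abc}$ produces $(b^2+c^2)$ on the diagonal and $(b^2-c^2)$ off-diagonal), and re-index via $j=l+1$. The paper carries out exactly this computation, writing the intermediate formulas for $\pi_k(X_i)\cdot P_l$ and $\pi_k(X_i)^2\cdot P_l$ explicitly before combining.
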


\begin{proof}
For $X =(X_{i,j})\in \su(2)$, we have that 
\begin{align*}
(\pi_k(X)\cdot P)(\begin{smallmatrix} z \\ w \end{smallmatrix}) 
&= \left.\frac{d}{dt}\right|_{t=0} (\pi_k(e^{tX})\cdot P)(\begin{smallmatrix} z \\ w \end{smallmatrix}) 
= \left.\frac{d}{dt}\right|_{t=0} P\big(e^{-tX}( \begin{smallmatrix} z \\ w \end{smallmatrix})\big) \\
&= -\frac{\partial P}{\partial z}(\begin{smallmatrix} z \\ w \end{smallmatrix})(zX_{1,1}+wX_{1,2})-\frac{\partial P}{\partial w}(\begin{smallmatrix} z \\ w \end{smallmatrix})(zX_{2,1}+wX_{2,2}).
\end{align*}
Furthermore, $e^{tX_1} = \left(\begin{smallmatrix} e^{it}& 0\\ 0& e^{-it} \end{smallmatrix}\right)$, $e^{tX_2} = \left(\begin{smallmatrix} \cos(t)&\sin(t) \\ -\sin(t)&\cos(t) \end{smallmatrix}\right)$, and $e^{tX_3} = \left(\begin{smallmatrix} \cos(t)&i\sin(t) \\ i\sin(t)&\cos(t) \end{smallmatrix}\right)$.
Taking in account the above remarks, one can check that $\pi_k(X_1) \cdot P_l = (k-2l)i \, P_l$, $\pi_k(X_2) \cdot P_l = -l\, P_{l-1} + (k-l)\, P_{l+1}$ and $\pi_k(X_3) \cdot P_l = -li\, P_{l-1} - (k-l)i\, P_{l+1}$, thus 
\begin{align*}
\pi_k(X_1)^2 \cdot P_l &= -(k-2l)^2 \, P_l,\\
\pi_k(X_2)^2 \cdot P_l &= l(l-1)\, P_{l-2} -\big((2l+1)k-2l^2\big)\, P_l + (k-l)(k-l-1)\, P_{l+2},\\
\pi_k(X_3)^2 \cdot P_l &= -l(l-1)\, P_{l-2} -\big((2l+1)k-2l^2\big)\, P_l - (k-l)(k-l-1)\, P_{l+2},
\end{align*}
for every $0\leq l\leq k$.
Here, it is understood that $P_l=0$ if $l<0$ or $l>k$. 
The proof follows by $\pi_k(-C_{\abc})\cdot P_l = -a^2 \,(\pi_k(X_1)^2\cdot P_l) -b^2 \,(\pi_k(X_2)^2\cdot P_l) -c^2 \,(\pi_k(X_3)^2\cdot P_l)$.
\end{proof}


\begin{example}\label{ex3:eigenvalues}(\textsl{The Laplacian on the isotypical component of $\pi_0$, $\pi_1$ and $\pi_2$})
We now study in detail the first three cases. 
We have that 
\begin{align*}
[\pi_0(-C_{\abc})]&=(0),\\
[\pi_1(-C_{\abc})] 
  &= 
\left(\begin{array}{cc}
a^{2} + b^{2} + c^{2} & 0 \\
0 & a^{2} + b^{2} + c^{2}
\end{array}\right)
,\\
[\pi_2(-C_{\abc})]&= 
\left(\begin{array}{ccc}
4  a^{2} + 2  b^{2} + 2  c^{2} & 0 & 2  b^{2} - 2  c^{2} \\
0 & 4  b^{2} + 4  c^{2} & 0 \\
2  b^{2} - 2  c^{2} & 0 & 4  a^{2} + 2  b^{2} + 2  c^{2}
\end{array}\right)
.
\end{align*}
Write $A=\diag\abc$. 
One can easily check that the eigenvalues of the above three matrices are $\lambda_1^{\pi_0,A}=0$, $\lambda_1^{\pi_1,A} =\lambda_2^{\pi_1,A}= a^{2} + b^{2} + c^{2}$, and $\lambda_3^{\pi_2,A} := 4(a^{2} +  b^{2})$, $\lambda_2^{\pi_2,A} := 4(a^{2} +  c^{2})$, $\lambda_1^{\pi_2,A} := 4(b^{2} +  c^{2})$.
We note that $\lambda_1^{\pi_2,A}\leq \lambda_2^{\pi_2,A}\leq \lambda_3^{\pi_2,A}$ if $a\geq b\geq c>0$. 

According to Proposition~\ref{prop2:spec_A}, we have that
\begin{itemize}
\item $\pi_1(-C_{\abc})$ contributes the eigenvalue $a^{2} + b^{2} + c^{2}$ to $\spec(\SU(2),g_{\abc})$ with multiplicity four;
\item $\pi_2(-C_{\abc})$ contributes the eigenvalues $\lambda_1^{\pi_2,A}$, $\lambda_2^{\pi_2,A}$ and $\lambda_3^{\pi_2,A}$ to $\spec(\SU(2),g_{\abc})$ and to $\spec(\SO(3),g_{\abc})$, each of them with multiplicity three. 
\end{itemize}
Notices these eigenvalues could have greater multiplicity in the corresponding spectra.
Indeed, this is the case when the set $\mathcal E_\lambda(G,A)$ in Proposition~\ref{prop2:spec_A} has more than one element. 
\end{example}

\begin{remark}
When $k$ is odd, the representation $\pi_k$ of $\SU(2)$ is symplectic. 
This means in particular that the eigenvalues of $\pi_k(-C_{\abc})$ come in pairs.
Hence, for each odd integer $k$, $\pi_k(-C_{\abc})$ contributes  $(k+1)/2$ eigenvalues to $\spec(\SU(2),g_{\abc})$, each of them with multiplicity $2(k+1)$. 
\end{remark}

When $a,b,c$ are different, it does not seem feasible to have an explicit expression  for every eigenvalue of $\pi_k(-C_{\abc})$ for all $k$. 
The next lemma controls them by giving a lower bound. 

\begin{lemma}\label{lem3:lowerboundCasimireigenvalues}
Let $k$ be a positive integer. 
If $a\geq b\geq c>0$, then any eigenvalue $\lambda$ of $\pi_{k}(-C_{\abc})$ satisfies $\lambda\geq 2kb^2+k^2c^2$.
Moreover, if $k$ is odd, then $\lambda\geq a^2+(2k-1)b^2+k^2c^2$. 
\end{lemma}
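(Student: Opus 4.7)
My plan is to apply the Gershgorin Circle Theorem, as suggested in the introduction, to the explicit matrix $[\pi_k(-C_{\abc})]$ provided by Lemma~\ref{lem3:pi_k(C)}. The operator $\pi_k(-C_{\abc})$ is self-adjoint with respect to the $\SU(2)$-invariant Hermitian inner product on $V_{\pi_k}$, so all of its eigenvalues are real. Gershgorin (applied to columns) then tells us that every eigenvalue $\lambda$ satisfies $\lambda \geq d_j - r_j$ for at least one index $j$, where $d_j$ is the $j$-th diagonal entry and $r_j$ is the sum of the absolute values of the remaining entries in the $j$-th column.

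From Lemma~\ref{lem3:pi_k(C)}, column $j$ has only two potentially nonzero off-diagonal entries, at rows $j-2$ and $j+2$, and the hypothesis $b \geq c$ makes both of them nonnegative, so
\begin{equation*}
r_j = \bigl((j-2)(j-1) + (k-j)(k+1-j)\bigr)(b^2 - c^2).
\end{equation*}
After substituting $l = j - 1 \in \{0,1,\ldots,k\}$, I would use the identity $l(l-1) + (k-l)(k-l-1) = l^2 + (k-l)^2 - k$ to rewrite $r_{l+1}$ and then the key algebraic identity $2l(k-l) - l^2 - (k-l)^2 = -(k-2l)^2$ to collapse the difference to the clean expression
\begin{equation*}
d_{l+1} - r_{l+1} = 2kb^{2} + k^{2}c^{2} + (k-2l)^{2}(a^{2} - b^{2}).
\end{equation*}

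Since $a \geq b$, the last summand is nonnegative, so the minimum over $l \in \{0,1,\ldots,k\}$ is attained where $(k-2l)^{2}$ is as small as possible. For $k$ even the minimum is $0$ (at $l = k/2$), yielding $\lambda \geq 2kb^{2} + k^{2}c^{2}$; for $k$ odd the minimum is $1$ (at $l = (k\pm 1)/2$), yielding the stronger bound $\lambda \geq a^{2} + (2k-1)b^{2} + k^{2}c^{2}$. Because $a^{2} \geq b^{2}$, the odd-$k$ bound also implies the weaker even-$k$ bound, so this is really a single argument producing a sharper output when $k$ is odd. The only real obstacle is the algebraic collapse that produces the displayed identity for $d_{l+1} - r_{l+1}$; once that simplification is in hand, minimizing a nonnegative square over the parity-constrained values of $k-2l$ finishes the proof.
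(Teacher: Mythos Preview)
Your proposal is correct and takes essentially the same approach as the paper: Gershgorin's theorem applied to the column sums of the explicit matrix from Lemma~\ref{lem3:pi_k(C)}, followed by the same algebraic simplification (your substitution $l=j-1$ turns your $(k-2l)^2(a^2-b^2)+2kb^2+k^2c^2$ into the paper's $(k+2-2j)^2(a^2-b^2)+2kb^2+k^2c^2$) and the same parity-based minimization of the square term. Your explicit remark that $\pi_k(-C_{\abc})$ is self-adjoint, hence has real eigenvalues, is a nice clarification the paper leaves implicit.
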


\begin{proof}
The main tool will be classical and elementary result called \emph{Gershgorin Circle Theorem} (see for instance \href{https://en.wikipedia.org/wiki/Gershgorin_circle_theorem}{Wikipedia's page \emph{Gershgorin Circle Theorem}}).
It says that any eigenvalue of an $m\times m$ complex matrix $M$ lies in the union over $1\leq j\leq m$ of the disks in the complex plane with center $M_{j,j}$ and radius $\sum_{i\neq j} |M_{i,j}|$. 
Hence, any eigenvalue $\lambda$ of $\pi_k(-C_{\abc})$ lies in the disk of radius $[\pi_k(-C_{\abc})]_{j-2,j}+[\pi_k(-C_{\abc})]_{j+2,j}$ centered at $[\pi_k(-C_{\abc})]_{j,j}$, for some $1\leq j\leq k+1$.
Thus, for some $1\leq j\leq k+1$,
\begin{equation*}
\lambda \geq \extremo (k,j,\abc):= [\pi_k(-C_{\abc})]_{j,j} -[\pi_k(-C_{\abc})]_{j-2,j}-[\pi_k(-C_{\abc})]_{j+2,j}.
\end{equation*} 
Consequently, it remains to show that $\extremo (k,j,\abc)\geq 2kb^2+k^2c^2$ for every $k$ and also $\extremo (k,j,\abc)\geq a^2+(2k-1)b^2+k^2c^2$ for every $k$ odd, in both cases for all $1\leq j\leq k+1$.

We note that the expressions in Lemma~\ref{lem3:pi_k(C)} for the entries of $[\pi_k(-C_{\abc})]$ imply that $[\pi_k(-C_{\abc})]_{j-2,j}=0$ for $j=0,1$ and $[\pi_k(-C_{\abc})]_{j+2,j}=0$ for $j=k,k+1$, though these entries do not exist. 
This fact provides the necessary consistency to the previous paragraph.

For $1\leq j\leq k+1$, Lemma~\ref{lem3:pi_k(C)} implies that 
\begin{align}\label{eq:alpha}
\extremo (k,j,\abc)
&= (k-2(j-1))^2 a^2 + ((2j-1)k-2(j-1)^2)(b^2+c^2) \\
&\quad - (j-2)(j-1)(b^2-c^2)- (k-j)(k+1-j)(b^2-c^2) \notag\\
&= (k+2-2j)^2a^2+k^2 c^2 - ((k+2-2j)^2-2k)b^2 \notag \\
&= (k+2-2j)^2(a^2-b^2)+2kb^2+k^2 c^2 . \notag
\end{align}
Since $a\geq b\geq c>0$, the real function $j\mapsto \extremo (k,j,\abc)$ is quadratic with its minimum at $j=k/2+1$. 
Consequently, $\extremo (k,j,\abc) \geq \extremo (k,k/2+1,\abc) = 2kb^2+k^2 c^2$.
Moreover, when $k$ is odd, the minimum of $\extremo (k,j,\abc)$ for $j$ restricted to $\Z$ is clearly attained when $j=(k+1)/2$, thus $\extremo (k,j,\abc)\geq  a^2+(2k-1)b^2+k^2 c^2\geq a^2+b^2+k^2 c^2$ for every $1\leq j\leq k+1$. 
\end{proof}

\begin{remark}\label{rem3:tridiagonal}
We claim that the matrix $[\pi_k(C_{\abc})]$ is similar to two blocks of tridiagonal matrices. 
Let $P=(p_{i,j})$ and $Q=(q_{i,j})$ be the square matrices of size $\lfloor\frac{k+2}2 \rfloor$ and $\lfloor\frac{k+1}2 \rfloor$ respectively, given by $p_{i,j}=[\pi_k(C_{\abc})]_{2i-1,2j-1}$ and  $q_{i,j}=[\pi_k(C_{\abc})]_{2i,2j}$. 
Thus, $P$ (resp.\ $Q$) is the matrix resulting from $[\pi_k(C_{\abc})]$ after subtracting the even (resp.\ odd) rows and columns.
In particular, it turns out that $P$ and $Q$ are tridiagonal matrices (i.e.\ every entry $(i,j)$ with $|i-j|\geq 2$ vanishs). 
In other words, the matrix of $\pi_k(C_{\abc})$ with respect to the ordered basis $\{P_0, P_2, P_4,\dots\} \cup \{P_1,P_3, P_5,\dots\}$ is given by $\diag(P,Q)$, two blocks of tridiagonal matrices as asserted. 
This fact was already observed in \cite[Thm.~4.1]{Schueth17}.
\end{remark}

\subsection{Explicit expressions}\label{subsec:expressions}
We now prove Theorem~\ref{thm1:SUlambda_1} as a consequence of Proposition~\ref{prop2:spec_A} and Lemma~\ref{lem3:lowerboundCasimireigenvalues}.

\begin{proof}[Proof of Theorem~\ref{thm1:SUlambda_1}]
We have to prove that the smallest non-zero eigenvalue $\lambda_1(\SU(2),g_{\abc})$ in $\spec(\SU(2),g_{\abc})$ is equal to $\lambda_{\min}\abc := \min\{a^{2} + b^{2} + c^{2}, \; 4(b^{2} +  c^{2})\}$.
From Proposition~\ref{prop2:spec_A}, the set of eigenvalues of the Laplace--Beltrami operator on $(\SU(2),g_{\abc})$ is the union over $k$ of the set of eigenvalues of $\pi_{k}(-C_{\abc})$. 
Example~\ref{ex3:eigenvalues} shows that $a^{2} + b^{2} + c^{2}$ and $4(b^{2} +  c^{2})$ lie in $\spec(\SU(2),g_{\abc})$, thus  
$
\lambda_1(\SU(2),g_{\abc}) \leq \lambda_{\text{min}}\abc. 
$ 
Moreover, for every $k\geq2$, Lemma~\ref{lem3:lowerboundCasimireigenvalues} implies that any eigenvalue $\lambda$ of $\pi_k(C_{\abc})$ satisfies
\begin{equation}\label{eq3:lambdalowerbound}
\lambda\geq 2kb^2+k^2c^2\geq 4(b^2+c^2)\geq \lambda_{\text{min}}\abc.
\end{equation}
In particular, for $k\geq3$, we see that $\pi_{k}(-C_{\abc})$ does not contribute to the fundamental tone of $g_{(a,b,c)}$.
Hence $\lambda_1(\SU(2),g_{\abc}) =\lambda_{\text{min}}\abc$ since the only eigenvalue of $\pi_1({-C_{\abc}})$ is $a^{2} + b^{2} + c^{2}$.

We now compute the multiplicity of $\lambda_1(\SU(2),g_{\abc})$ in $\spec(\SU(2),g_{\abc})$.
From Proposition~\ref{prop2:spec_A}, this number is equal to 
\begin{equation}
\sum_{(k,j)\in\mathcal E} (k+1),
\end{equation}
where $\mathcal E=\{(k,j): k\geq1,\, 1\leq j\leq k+1,\, \lambda_1(\SU(2),g_{\abc})= \lambda_{j}^{\pi_k,\diag\abc}\}$. 
If $a^{2} + b^{2} + c^{2}<4(b^2+c^2)$, then $\lambda_1(\SU(2),g_{\abc})=a^{2} + b^{2} + c^{2}<\lambda_{j}^{\pi_k,\diag\abc}$ for all $k\geq2$ by \eqref{eq3:lambdalowerbound}, hence $\mathcal E=\{(1,1),(1,2)\}$ and therefore the multiplicity is equal to $2+2=4$. 

We now suppose  $4(b^2+c^2)<a^{2} + b^{2} + c^{2} = \lambda_j^{\pi_1,\diag\abc}$ for $j=1,2$, so $\lambda_1(\SU(2),g_{\abc})=4(b^2+c^2)$.
It follows that $a^2>3(b^2+c^2)$, thus $\lambda_1(\SU(2),g_{\abc})=\lambda_1^{\pi_2,\diag\abc} =4(b^2+c^2)< \lambda_2^{\pi_2,\diag\abc}=4(a^2+c^2)\leq \lambda_3^{\pi_2,\diag\abc}=4(a^2+b^2)$.
Furthermore,  for any $k\geq3$, \eqref{eq3:lambdalowerbound} gives $\lambda_1(\SU(2),g_{\abc}) = 4(b^2+c^2)< 6b^2+9c^2\leq  \lambda_{j}^{\pi_k,\diag\abc}$. 
Consequently, $\mathcal E=\{(2,1)\}$ and the multiplicity of $\lambda_1(\SU(2),g_{\abc})$ in $\spec(\SU(2),g_{\abc})$ is equal to $3$. 

We conclude the proof by assuming $a^{2} + b^{2} + c^{2}=4(b^2+c^2)$, so $\lambda_1(\SU(2),g_{\abc})=a^{2} + b^{2} + c^{2}=4(b^2+c^2)$ and $(1,1),(1,2),(2,1)\in\mathcal E$.
We claim that $\mathcal E=\{(1,1),(1,2),(2,1)\}$, which implies that the multiplicity of $\lambda_1(\SU(2),g_{\abc})$ in the spectrum of $\Delta_{g_{\abc}}$ is equal to $2+2+3=7$ as asserted. 
We have already seen in the previous case that $\lambda_1(\SU(2),g_{\abc}) = 4(b^2+c^2)<  \lambda_{j}^{\pi_k,\diag\abc}$ for all $k\geq3$.
Furthermore, $4(b^2+c^2)< 4(a^2+c^2)\leq 4(a^2+b^2)$ still holds if $a^{2} =3(b^2+c^2)$, and the proof is complete. 
\end{proof}

We next give more information than in Theorem~\ref{thm1:SUlambda_1} for low eigenvalues in $\spec(\SU(2),g_{\abc})$. 
To do that, we use the following different way to describe $\spec(M,g)$ for an arbitrary compact Riemannian manifold $(M,g)$.

\begin{notation}\label{not3:spec=set}
The spectrum $\spec(M,g)$ of the Laplace--Beltrami operator $\Delta_g$ on $M$ is the set (not a multiset anymore) of pairs $(\mu_j(M,g),m_j(M,g))$ for $j\in \N_0$, where 
$$
0<\mu_1(M,g)<\mu_2(M,g)<\mu_3(M,g)<\cdots
$$
are the eigenvalues of $\Delta_g$, and $m_j(M,g)$ is a positive integer indicating the multiplicity of $\mu_j(M,g)$ in $\spec(M,g)$. 
In the notation of \eqref{eq1:spectrum(M,g)}, $\mu_1(M,g)=\lambda_j(M,g)$ for all $1\leq j\leq m_1(M,g)$, $\mu_2(M,g)=\lambda_j(M,g)$ for all $m_1(M,g)+1\leq j\leq m_1(M,g)+m_2(M,g)$, and so on. 
\end{notation}

\begin{example}(\textsl{The fundamental tone of a homogeneous three-sphere})
In the language of Notation~\ref{not3:spec=set}, Theorem~\ref{thm1:SUlambda_1} is equivalent to 
\begin{equation}
(\mu_1(\SU(2),g_{\abc}), m_1(\SU(2),g_{\abc})) = 
\begin{cases}
(a^2+b^2+c^2,4)&\quad\text{if $a^2<3(b^2+c^2)$},\\ 
(a^2+b^2+c^2,7)&\quad\text{if $a^2=3(b^2+c^2)$},\\
(4(b^2+c^2),3)&\quad\text{if $a^2>3(b^2+c^2)$}.
\end{cases}
\end{equation}
\end{example}

It is also possible to give expressions for $(\mu_j(\SU(2),g_{\abc}), m_j(\SU(2),g_{\abc}))$ for higher $j$, though the division of cases becomes involved soon.
As an example, the next result shows some properties for the arrangement of the eigenvalues in $\spec(\SU(2),g_{\abc})$. 


\begin{theorem}\label{thm3:nexteigenvalues}
Assume $a\geq b\geq c>0$.
The following assertions hold. 
\begin{enumerate}
 \item If $j$ satisfies $\mu_j(\SU(2),g_{\abc})=4(b^2+c^2)$, then $j=1$ or $j=2$. That is, $4(b^2+c^2)$ is the first or second positive eigenvalue of $\Delta_{g_{\abc}}$ on $\SU(2)$ without counting multiplicities. 
 
 \item Fix $b\geq c>0$. For $a\geq b$, let $j(a)$ be the positive integer satisfying that $a^2+b^2+c^2=\mu_{j(a)}(\SU(2),g_{\abc})$. Then $j(a)\to\infty$ when $a\to\infty$. 
\end{enumerate}
\end{theorem}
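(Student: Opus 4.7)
The plan for part (i) is a direct application of Lemma~\ref{lem3:lowerboundCasimireigenvalues}: for every $k\geq 3$, every eigenvalue of $\pi_k(-C_{\abc})$ is at least $2kb^2+k^2c^2\geq 6b^2+9c^2>4(b^2+c^2)$, so the distinct Laplace eigenvalues in $(0,4(b^2+c^2)]$ can only arise from $\pi_1$ or $\pi_2$. Example~\ref{ex3:eigenvalues} identifies these as $a^2+b^2+c^2$ (from $\pi_1$) and $4(b^2+c^2)\leq 4(a^2+c^2)\leq 4(a^2+b^2)$ (from $\pi_2$, using $a\geq b\geq c$). Thus the distinct positive eigenvalues of $\Delta_{g_{\abc}}$ not exceeding $4(b^2+c^2)$ form a subset of $\{a^2+b^2+c^2,\,4(b^2+c^2)\}$, which has cardinality at most two, forcing $4(b^2+c^2)\in\{\mu_1(\SU(2),g_{\abc}),\,\mu_2(\SU(2),g_{\abc})\}$.

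For part (ii), the strategy is to produce, for each $N\in\N$, an $N$-tuple of pairwise distinct positive Laplace eigenvalues of $\Delta_{g_{\abc}}$ bounded above by a constant $M_N$ depending only on $N,b,c$; then every $a$ with $a^2+b^2+c^2>M_N$ will satisfy $j(a)\geq N+1$, and since $N$ is arbitrary, $j(a)\to\infty$ as $a\to\infty$. The key technical input is an $a$-independent upper bound $\beta_k(a)\leq L_k:=(k^2/2+k)(b^2+c^2)$ for the smallest eigenvalue $\beta_k(a)$ of $\pi_k(-C_{\abc})$ when $k\geq 2$ is even. From the computation in the proof of Lemma~\ref{lem3:pi_k(C)} one has $\pi_k(X_1)\cdot P_{k/2}=0$, so the $a^2$-term vanishes, and Lemma~\ref{lem3:pi_k(C)} shows the $P_{k/2}$-coefficient of $\pi_k(-C_{\abc})\cdot P_{k/2}$ is exactly $L_k$. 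Since $\{P_l\}_{l=0}^k$ are eigenvectors of the skew-Hermitian operator $\pi_k(X_1)$ with pairwise distinct eigenvalues $(k-2l)i$, they form an orthogonal family in the unitary inner product on $V_{\pi_k}$, so the Rayleigh quotient $\langle \pi_k(-C_{\abc})P_{k/2},\,P_{k/2}\rangle/\|P_{k/2}\|^2$ equals exactly $L_k$, yielding the bound $\beta_k(a)\leq L_k$.

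Combining this with the lower bound $\beta_k(a)\geq 2kb^2+k^2c^2$ from Lemma~\ref{lem3:lowerboundCasimireigenvalues}, I recursively choose even integers $k_1<k_2<\cdots$ by setting $k_1=2$ and taking each $k_{n+1}$ large enough that $2k_{n+1}b^2+k_{n+1}^2c^2>L_{k_n}$; this is always possible because the left-hand side grows without bound with $k_{n+1}$. Then $\beta_{k_n}(a)<\beta_{k_{n+1}}(a)$ for every $a\geq b$, so by Proposition~\ref{prop2:spec_A} we obtain pairwise distinct Laplace eigenvalues $\beta_{k_1}(a)<\beta_{k_2}(a)<\cdots$ of $\Delta_{g_{\abc}}$, each at most $L_{k_N}=:M_N$ independently of $a$, completing the construction.

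The main obstacle is the Rayleigh step above: one must work in the (non-normalized) orthogonal basis $\{P_l\}$ used in Lemma~\ref{lem3:pi_k(C)} rather than in an orthonormal basis, and identify $P_{k/2}$ as the correct test vector because it is precisely the one that kills the $a^2$-contribution; once the $P_l$ are recognized as weight vectors for the maximal torus with distinct weights, their orthogonality is automatic and the Rayleigh quotient collapses cleanly to the diagonal entry $L_k$ computed from Lemma~\ref{lem3:pi_k(C)}, with no normalization constants required.
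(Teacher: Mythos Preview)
Your proof is correct. Part~(i) is essentially the paper's argument.

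For part~(ii), your route differs from the paper's. The paper again invokes the Gershgorin Circle Theorem: for each even $k$ and $a$ sufficiently large (depending on $k$), the Gershgorin disk centred at the $(j=k/2+1)$-th diagonal entry separates from the others, trapping exactly one eigenvalue of $\pi_k(-C_{\abc})$ in the $a$-independent interval $(2kb^2+k^2c^2,\,k^2b^2+2kc^2)$; this eigenvalue is then smaller than $a^2+b^2+c^2$ for $a$ large, and letting $k$ range over even integers produces infinitely many such eigenvalues. Your approach replaces the Gershgorin separation step by a Rayleigh-quotient bound: since $P_{k/2}$ is the weight vector annihilated by $\pi_k(X_1)$, the Rayleigh quotient of $\pi_k(-C_{\abc})$ at $P_{k/2}$ is the $a$-free diagonal entry $L_k=(k^2/2+k)(b^2+c^2)$, giving $\beta_k(a)\le L_k$ for \emph{all} $a$, not just large $a$. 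You then explicitly build a subsequence $k_1<k_2<\cdots$ with $[2k_n b^2+k_n^2c^2,\,L_{k_n}]$ pairwise disjoint, so the $\beta_{k_n}(a)$ are automatically distinct. This is slightly more elementary, works uniformly in $a$, and makes the distinctness of the resulting Laplace eigenvalues explicit---a point the paper's concluding line treats rather tersely.
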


\begin{proof}
We first show (i). 
If $a^2\geq 3(b^2+c^2)$, then $\mu_1(\SU(2),g_{\abc})=4(b^2+c^2)$ by Theorem~\ref{thm1:SUlambda_1}, and the assertion follows in this case. 
We now assume $a^2 < 3(b^2+c^2)$. 
We want to prove that $\mu_2(\SU(2),g_{\abc})=4(b^2+c^2)$. 
We know that $\pi_1(-C_{\abc})$ has exactly one eigenvalue, namely, $a^2+b^2+c^2=\mu_1(\SU(2),g_{\abc})$ by Theorem~\ref{thm1:SUlambda_1}.
Hence, it remains to show that $4(b^2+c^2) \leq \lambda_{i}^{\pi_k,\diag{\abc}}$ for all $k\geq2$.
This immediately follows by Lemma~\ref{lem3:lowerboundCasimireigenvalues}. 

We now show (ii). 
Like in the proof of Lemma~\ref{lem3:lowerboundCasimireigenvalues}, Gershgorin Circle Theorem ensures that every eigenvalue $\lambda$ of $\pi_k(-C_{\abc})$ lies in a disk of center $[\pi_k(-C_{\abc})]_{j,j}$ and ratio $[\pi_k(-C_{\abc})]_{j-2,j}+[\pi_k(-C_{\abc})]_{j+2,j}$ for some $1\leq j\leq k+1$.
That is, 
\begin{equation}\label{eq3:intervals}
\lambda\in \bigcup_{j=1}^{k+1} \big(\alpha(k,j,\abc),\beta(k,j,\abc) \big),
\end{equation}
where $\beta(k,j,\abc) := [\pi_k(-C_{\abc})]_{j,j} +[\pi_k(-C_{\abc})]_{j-2,j}+[\pi_k(-C_{\abc})]_{j+2,j}.$
Moreover, the Gershgorin Circle Theorem also yields that if one of the intervals in \eqref{eq3:intervals} is disjoint to the other ones, then there is exactly one eigenvalue of $\pi_k(-C_{\abc})$ liying in this interval. 

Fix $k$ even. 
If $a$ is sufficiently large, then
$$
\beta(k,\tfrac k2+1,\abc) = b^2k^2+2c^2k <  (k+2-2j)^2(a^2-b^2)+2kb^2+k^2 c^2= \alpha(k,j,\abc)
$$ 
for all $j\neq k/2-1$, by Lemma~\ref{lem3:pi_k(C)}. 
Consequently, the $(j=k/2-1)$-th interval in \eqref{eq3:intervals} 
$$
\big(\alpha(k,\tfrac k2+1,\abc),\beta(k,\tfrac k2+1,\abc)\big)=\big( 2kb^2+k^2c^2,k^2b^2+2kc^2\big), 
$$ 
is disjoint to the other intervals for $a$ big enough. 
Hence, $\pi_k(-C_{\abc})$ has exactly one eigenvalue, say $\lambda_1^{\pi_k,\diag\abc}$, less than $b^2k^2+c^2k \leq a^2+b^2+c^2$ for $a$ large enough.  
We conclude that 
$
\lim_{a\to\infty} j(a)\geq \# \{\lambda_1^{\pi_k,\diag\abc}:k\geq0 \text{ even}\}=\infty,
$
as asserted.
\end{proof}

\begin{proof}[Proof of Theorem~\ref{thm1:SOlambda_1}]
The unitary dual of $\SO(3)$ is given by $\{\pi_{2k} :k\geq0\}$.
By Proposition~\ref{prop2:spec_A}, the set of eigenvalues of $(\SO(3),g_{\abc})$ is the union over $k$ of the set of eigenvalues of $\pi_{2k}(-C_{\abc})$. 
Example~\ref{ex3:eigenvalues} ensures that $4(b^{2} +  c^{2})$ is in $\spec(\SO(3),g_{\abc})$, thus $\lambda_1(\SO(3),g_{\abc})\leq 4(b^{2} +  c^{2})$.
Lemma~\ref{lem3:lowerboundCasimireigenvalues} implies for all $k\geq2$ and any $j$ that
\begin{equation}\label{eq3:SOlambdalowerbound}
\lambda_{j}^{\pi_{2k},\diag\abc} \geq 4kb^2+4k^2c^2\geq 8b^2+16 c^2>4(b^2+c^2).
\end{equation}

It only remains to compare $4(b^2+c^2)$ with the rest of the eigenvalues of $\pi_2(-C_\abc)$, namely, $4(a^2+c^2)$ and $4(a^2+b^2)$.
The assumption $a\geq b\geq c >0$ clearly forces
\begin{equation}\label{eq3:eigenvaluespi_2}
4(b^2+c^2)\leq 4(a^2+c^2)\leq 4(a^2+b^2),
\end{equation}
which proves $\lambda_1(\SO(3),g_{\abc})= 4(b^{2} +  c^{2})$.
Moreover, \eqref{eq3:SOlambdalowerbound} and \eqref{eq3:eigenvaluespi_2} immediately imply the assertion on the multiplicity of $\lambda_1(\SO(3),g_{\abc})$ in $\spec(\SO(3),g_{\abc})$. 
\end{proof}


Any Berger sphere is isometric to $(\SU(2),g_{(a,b,b)})$ for some positive numbers $a,b$. 
We note that by Lemma~\ref{lem3:pi_k(C)} the condition $b=c$ implies the matrix of $\pi_{k}(-C_{\abc})$ is diagonal with diagonal entries
\begin{equation}
\nu_{k,j}(a,b):=a^2(k-2j)^2  + 2b^2((2j+1)k-2j^2)
\end{equation}
for $0\leq j\leq k$.  
Hence, Proposition~\ref{prop2:spec_A} and Lemma~\ref{lem3:pi_k(C)} immediately imply the following complete description of the spectra of three-dimensional Berger spheres. 
This result can be found in \cite[Lem.~4.1]{Tanno79}.

\begin{proposition}\label{prop3:specBerger}
For every $a,b$ positive real numbers, we have that 
\begin{align*}
\op{Spec}(\SU(2),g_{(a,b,b)}) &=
\bigcup_{k\geq0} \big\{\!\big\{
\underbrace{\nu_{k,j}(a,b),\dots,\nu_{k,j}(a,b)}_{(k+1)\text{-times}}:0\leq j\leq k
\big\}\!\big\},\\
\op{Spec}(\SO(3),g_{(a,b,b)}) &=
\bigcup_{k\geq0\atop k\text{ even}} \big\{\!\big\{
\underbrace{\nu_{k,j}(a,b),\dots,\nu_{k,j}(a,b)}_{(k+1)\text{-times}}:0\leq j\leq k
\big\}\!\big\}.
\end{align*}
\end{proposition}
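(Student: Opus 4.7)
The plan is to track how Lemma~\ref{lem3:pi_k(C)} degenerates when $c=b$ and then feed the outcome into Proposition~\ref{prop2:spec_A}. Both off-diagonal families of entries in Lemma~\ref{lem3:pi_k(C)} carry the factor $b^2-c^2$, so in the Berger case $c=b$ they vanish identically. Hence, with respect to the ordered basis $\mathcal{B}=\{P_0,P_1,\dots,P_k\}$ of $V_{\pi_k}$, the matrix $[\pi_k(-C_{(a,b,b)})]$ is diagonal, and its eigenvalues are exactly its diagonal entries
$$
(k-2(j-1))^2 a^2 + 2\bigl((2j-1)k-2(j-1)^2\bigr) b^2,\qquad 1\le j\le k+1.
$$
Reindexing via $l=j-1$ (so $0\le l\le k$) turns this into $\nu_{k,l}(a,b)$ as defined in the statement.

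Next I would invoke Proposition~\ref{prop2:spec_A}. For $G=\SU(2)$ the unitary dual is $\{\pi_k:k\ge 0\}$ and $d_{\pi_k}=k+1$, so each eigenvalue $\nu_{k,j}(a,b)$ of $\pi_k(-C_{(a,b,b)})$ contributes multiplicity $k+1$ to $\op{Spec}(\SU(2),g_{(a,b,b)})$, and letting $k\ge 0$ and $0\le j\le k$ exhausts all eigenvalues coming from all irreducible representations. This yields the claimed multiset identity; coincidences among different $\nu_{k,j}$ simply accumulate inside the union. For $G=\SO(3)$, the only change is that the unitary dual reduces to the $\pi_{2k}$, $k\ge 0$ (the representations of $\SU(2)$ that descend through $\SU(2)/\{\pm I_2\}$), which gives the second identity.

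There is no real obstacle: the whole argument hinges on the single algebraic observation that $b^2-c^2=0$ kills the off-diagonal entries of the Casimir matrix, so that its spectrum is read off directly from the diagonal. The only care needed is bookkeeping, namely the reindexing $l=j-1$ relating the indexing of Lemma~\ref{lem3:pi_k(C)} to that of $\nu_{k,j}(a,b)$, and the multiplicity factor $d_{\pi_k}=k+1$ enforced by Proposition~\ref{prop2:spec_A}.
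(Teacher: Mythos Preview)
Your proof is correct and matches the paper's own argument essentially line for line: the paper observes that by Lemma~\ref{lem3:pi_k(C)} the condition $b=c$ makes $[\pi_k(-C_{(a,b,b)})]$ diagonal with entries $\nu_{k,j}(a,b)$, and then invokes Proposition~\ref{prop2:spec_A} (together with the description of the unitary duals of $\SU(2)$ and $\SO(3)$) to read off the spectra. Your reindexing $l=j-1$ is the only extra detail, and it is handled correctly.
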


The spectrum of the Dirac operator on any Berger sphere was explicitly computed by B\"ar~\cite{Bar92}.

\section{Estimates on homogeneous 3-spheres} \label{sec:estimates}
Let $G$ be either $\SU(2)$ or $\SO(3)$ and let $g$ be a left-invariant metric on $G$.
This section deals with the estimates for $\lambda_1(G,g)$ in terms of $\op{diam}(G,g)^{-2}$ announced in Subsection~\ref{subsec1:estimates}. 
The proofs of these results are in Subsection~\ref{subsec:estimations}. 
They use the expressions for $\lambda_1(G,g)$ given in Theorems~\ref{thm1:SUlambda_1} and \ref{thm1:SOlambda_1} as well as an estimation of $\op{diam}(G,g)$ given in  Subsection~\ref{subsec:diameter}.


\subsection{Diameter}\label{subsec:diameter}
An expression for the diameter of an arbitrary left-invariant metric $g_{\abc}$ on $\SU(2)$ or $\SO(3)$ is not avaiable at the moment.
However, we will bound it by expressions in terms of the parameters $a$, $b$ and $c$.
The proof is based on the proof of \cite[Prop.~7.1]{EldredgeGordinaSaloff17}, which shows that $\op{diam}(\SU(2),g_{\abc})$ is comparable with $b^{-1}$ for all $a\geq b\geq c> 0$. 
That is, there are positive real numbers $D_0$ and $D_\infty$ such that 
\begin{equation}
D_0 b^{-1} \leq \op{diam}(\SU(2),g_{\abc}) \leq D_\infty b^{-1}.
\end{equation}
We will refine the above bounds for $\op{diam}(G,g_{\abc})$ (see Proposition~\ref{prop4:diam-acotado-Berger}), obtaining as a consequence explicit and optimal values for $D_0$ and $D_\infty$ (see Corollary~\ref{cor4:diameter*b}). 

The next result gives an explicit expression for the diameter of any Berger sphere, and also for the analogous case when $G=\SO(3)$.
The expression \eqref{eq4:diamBergerSO} below is proved by Podobryaev and Sachkovin~\cite[Thm.~4]{PodobryaevSachkov16}, while the expression \eqref{eq4:diamBergerSU} is shown by Podobryaev~ \cite[Thm.~1]{Podobryaev17}. 
In their notation, the multisets $\{\!\{I_1,I_2,I_3\}\!\}$ and $\{\!\{1/(2a)^2,1/(2b)^2,1/(2c)^2\}\!\}$ coincide.

\begin{proposition}\label{prop4:diameterBerger}
We have that
\begin{align}\label{eq4:diamBergerSU}
\op{diam}(\SU(2),g_{\abc}) &=
\begin{cases}
\dfrac{\pi}{b} &\quad\text{if } a=b\geq c,\\[3mm]
\dfrac{\pi}{a} &\quad\text{if } a>b=c\geq a/\sqrt{2},\\[3mm]
\dfrac{\pi}{2b\sqrt{1-b^2/a^2}} &\quad\text{if }  a/\sqrt{2}>b=c,
\end{cases} \\
\label{eq4:diamBergerSO}
\op{diam}(\SO(3),g_{\abc}) &=
\begin{cases}
\dfrac{\pi}{2b} &\quad\text{if } a>b=c,\\[3mm]
\dfrac{\pi}{2c} &\quad\text{if } a=b\geq c\geq b/\sqrt{2},\\[3mm]
\dfrac{\pi}{b}\sqrt{1+\frac{1}{4(c^2/b^2-1)}} &\quad\text{if }  a=b>\sqrt{2}c.
\end{cases}
\end{align}
\end{proposition}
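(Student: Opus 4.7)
The plan is to integrate the Euler--Arnold equation for left-invariant geodesics of $g_{(a,b,c)}$ in the Berger case (two of the three parameters equal), use the resulting explicit parametrization to find the first-return times along each geodesic through the identity, and then optimize over initial unit directions.

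Without loss of generality assume $b=c$; then $g_{(a,b,b)}$ admits a $U(1)$-symmetry rotating the span of $X_2,X_3$ (this rotation is a Lie algebra automorphism of $\su(2)$ preserving the inner product). Applied to the rigid body equation on $\su(2)$ governing $X(t) = \gamma(t)^{-1}\dot\gamma(t)$, this symmetry implies that the $X_1$-component of $X(t)$ is conserved and the component in the $(X_2,X_3)$-plane rotates at constant angular speed. Consequently every geodesic through $I$ factors as
\begin{equation*}
\gamma(t) = \exp\bigl(t(u_1 X_1 + u_2 X_2)\bigr)\,\exp(t\mu X_1),
\end{equation*}
for constants $(u_1,u_2,\mu)$ depending on the initial unit velocity and on $a,b$; both the endpoint and the length are then explicit via the $2\times 2$ matrix exponential in $\SU(2)$.

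The diameter of $(G, g_{(a,b,b)})$ is the supremum over unit initial directions of the first time $T$ at which $\gamma(T) = I$ for $G = \SU(2)$, or $\gamma(T) \in \{\pm I\}$ for $G = \SO(3)$. Using the rotational symmetry to reduce to a one-parameter family of candidate directions, the maximum can be located by elementary calculus. The three regimes in \eqref{eq4:diamBergerSU} and \eqref{eq4:diamBergerSO} correspond to which family of extremal geodesics realizes the maximum---an axial one-parameter subgroup along the distinguished direction, a planar geodesic in the plane of equal eigenvalues, or a genuinely helical one---and the transition values $b = a/\sqrt{2}$ and $c = b/\sqrt{2}$ are precisely the thresholds at which two candidate extremal lengths coincide.

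The main obstacle is the minimality check: proving that the direction attaining the supremum of first-return times really realizes the diameter, i.e.\ that the corresponding geodesic segment is globally length-minimizing and not merely a critical point of the length functional. This requires comparing each candidate with every alternative geodesic joining $I$ to the proposed farthest point. In \cite{PodobryaevSachkov16, Podobryaev17} the verification is carried out via the Pontryagin maximum principle from optimal control theory, which is the natural tool for identifying cut points of left-invariant metrics on Lie groups.
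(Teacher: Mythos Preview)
The paper does not prove this proposition; it simply quotes \cite[Thm.~4]{PodobryaevSachkov16} for \eqref{eq4:diamBergerSO} and \cite[Thm.~1]{Podobryaev17} for \eqref{eq4:diamBergerSU}, and your final paragraph ultimately defers to the same references. The geodesic factorization you write down is indeed the one underlying those papers.

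There is, however, a genuine error in your outline. You assert that the diameter equals the supremum over unit initial directions of the first time $T$ with $\gamma(T)=I$ (resp.\ $\gamma(T)\in\{\pm I\}$ for $\SO(3)$). This is false already in the round case $a=b=c$: every unit-speed geodesic through $I$ is a great circle returning to $I$ at time $2\pi/a$, so your supremum equals $2\pi/a$, whereas the diameter is $\pi/a$. The quantity one must maximize is the \emph{cut time} in each direction---the last instant at which the geodesic is length-minimizing---and determining the cut times is exactly the substantive content of \cite{PodobryaevSachkov16,Podobryaev17}. Your last paragraph speaks of a ``minimality check'' as if it were a verification of an otherwise correct candidate, but the first-return-time optimization you propose produces the wrong numbers in every regime, and no subsequent check can repair that. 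The honest options are to cite the references, as the paper does, or to reproduce the full cut-locus analysis.
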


The following well-known result is included for completeness in the case of interest to us. 

\begin{lemma}\label{lem4:diamg1g2}
Let $G$ be a compact connected Lie group and let $g_1$ and $g_2$ be left-invariant metrics on $G$.
If $g_1(X,X)\leq g_2(X,X)$ for all left-invariant vector field $X$ on $G$, then $\op{diam}(G,g_1)\leq \op{diam}(G,g_2)$.
\end{lemma}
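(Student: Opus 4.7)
The plan is to reduce the global statement about diameters to a pointwise statement about the metrics on tangent spaces, then use the definition of the Riemannian distance as an infimum of curve lengths. The key observation is that the hypothesis, phrased in terms of left-invariant vector fields, is equivalent to a pointwise inequality of the inner products $g_1|_p \leq g_2|_p$ on $T_pG$ at every point $p\in G$.

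First I would unpack the hypothesis. Left-invariant vector fields on $G$ are in one-to-one correspondence with the Lie algebra $\mathfrak g = T_eG$, and the value of a left-invariant vector field $X$ at a point $p\in G$ is $d(L_p)_e(X_e)$, where $L_p$ denotes left-translation by $p$. Since $g_1$ and $g_2$ are left-invariant, for any $v\in T_pG$ we can pull back via $d(L_{p^{-1}})_p$ to an element $X_e\in\mathfrak g$ and obtain $g_i|_p(v,v) = g_i|_e(X_e,X_e)$ for $i=1,2$. The assumption $g_1(X,X)\le g_2(X,X)$ for every left-invariant vector field $X$ therefore yields the pointwise inequality $g_1|_p(v,v)\le g_2|_p(v,v)$ for every $p\in G$ and every $v\in T_pG$.

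Next I would compare lengths of curves. For any piecewise smooth curve $\gamma:[0,1]\to G$, the pointwise inequality above gives
\begin{equation*}
L_{g_1}(\gamma)=\int_0^1 \sqrt{g_1|_{\gamma(t)}(\gamma'(t),\gamma'(t))}\,dt \;\le\; \int_0^1 \sqrt{g_2|_{\gamma(t)}(\gamma'(t),\gamma'(t))}\,dt = L_{g_2}(\gamma).
\end{equation*}
Taking the infimum over all piecewise smooth curves joining two given points $p,q\in G$ produces $d_{g_1}(p,q)\le d_{g_2}(p,q)$. Taking the supremum over all pairs $(p,q)\in G\times G$ then yields $\op{diam}(G,g_1)\le \op{diam}(G,g_2)$, as desired.

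There is no substantial obstacle here; the only subtlety is ensuring that the hypothesis, stated on the finite-dimensional space of left-invariant vector fields, does imply the pointwise inequality at every point of $G$, which is immediate from left-invariance of both metrics. Compactness of $G$ is not actually needed for the comparison of distances, only to guarantee that the diameters are finite and attained.
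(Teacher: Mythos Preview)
Your proof is correct and follows essentially the same approach as the paper: both rely on the pointwise inequality $g_1(v,v)\le g_2(v,v)$ to compare curve lengths. The only cosmetic difference is that the paper picks points $p,q$ realizing $\op{diam}(G,g_1)$ and a single $g_2$-minimizing geodesic between them, whereas you first establish the global inequality $d_{g_1}\le d_{g_2}$ via an infimum over all curves and then take the supremum; your version has the mild advantage of not invoking the existence of a minimizing geodesic (so compactness is genuinely not used), and you also spell out explicitly why the hypothesis on left-invariant fields yields the pointwise inequality, which the paper leaves implicit.
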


\begin{proof}
Let $p,q\in G$ satisfying that $\op{diam}(G,g_1) = \op{dist}_{g_1}(p,q)$, and let $\gamma:[0,1]\to G$ be a geodesic with respect to $g_2$ realizing the distance between $p$ and $q$, that is, $\gamma(0)=p$, $\gamma(1)=q$ and $\op{dist}_{g_2}(p,q) = \op{lenght}_{g_2}(\gamma):= \int_0^1 g_2(\dot\gamma(t),\dot\gamma(t))^{1/2} dt$. 
Hence, 
\begin{align*}
\op{diam}(G,g_2) 
  &\geq \int_0^1 g_2(\dot\gamma(t),\dot\gamma(t))^{1/2} dt 
  \geq \int_0^1 g_1(\dot\gamma(t),\dot\gamma(t))^{1/2} dt 
  \geq \op{dist}_{g_1}(p,q)=\op{diam}(G,g_1),
\end{align*}
and the proof is complete. 
\end{proof}

\begin{proposition}\label{prop4:diam-acotado-Berger}
For all $a\geq b\geq c>0$, we have that
\begin{align}\label{eq4:diam-acotadoSU}
\frac{\pi}{b}\geq 
\op{diam}(\SU(2),g_{\abc}) \geq &
\begin{cases}
\dfrac{\pi}a & \text{if }a\leq \sqrt{2} b, \\
\dfrac{\pi}{2b\sqrt{1-b^2/a^2}} & \text{if }a>\sqrt{2} b .
\end{cases} \\
\label{eq4:diam-acotadoSO}
\frac{\pi}{2b} \leq  \op{diam}(\SO(3),g_{\abc}) \leq&  
\left\{
\begin{array}{ll}
\dfrac{\pi}{2c} \quad& \text{if }c\leq \sqrt{2} b,\\[3mm]
\dfrac{\pi}{b} \sqrt{\frac{1}{4(c^2/b^2-1)}} \quad& \text{if } c>\sqrt{2} b.
\end{array}
\right.
\end{align}
\end{proposition}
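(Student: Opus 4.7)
The strategy is to sandwich the metric $g_{\abc}$ between two carefully chosen Berger metrics and combine the monotonicity Lemma~\ref{lem4:diamg1g2} with the explicit Berger diameters provided by Proposition~\ref{prop4:diameterBerger}. The key observation is that, under the assumption $a\geq b\geq c>0$, the diagonal entries of the metrics satisfy $1/a^{2}\leq 1/b^{2}\leq 1/c^{2}$, which yields the componentwise inequality of inner products on $\su(2)$
\begin{equation*}
g_{(a,b,b)}(X,X) \;\leq\; g_{\abc}(X,X) \;\leq\; g_{(b,b,c)}(X,X)
\end{equation*}
for every left-invariant vector field $X$. Applying Lemma~\ref{lem4:diamg1g2} to both inequalities gives, for $G=\SU(2)$ or $G=\SO(3)$,
\begin{equation*}
\op{diam}(G,g_{(a,b,b)}) \;\leq\; \op{diam}(G,g_{\abc}) \;\leq\; \op{diam}(G,g_{(b,b,c)}),
\end{equation*}
and each of the outer terms is the diameter of a Berger metric, hence available explicitly from Proposition~\ref{prop4:diameterBerger}.

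For $G=\SU(2)$, the upper bound is obtained by specializing \eqref{eq4:diamBergerSU} to $g_{(b,b,c)}$ (two equal parameters at the front, $b\geq c$), which falls in the first case of \eqref{eq4:diamBergerSU} and yields $\pi/b$. The lower bound is obtained from $g_{(a,b,b)}$ with $a\geq b$: the second case of \eqref{eq4:diamBergerSU} applies when $b\geq a/\sqrt{2}$ (i.e.\ $a\leq \sqrt{2}\,b$) producing $\pi/a$, while the third case applies when $b< a/\sqrt{2}$ (i.e.\ $a> \sqrt{2}\,b$) producing $\pi/(2b\sqrt{1-b^{2}/a^{2}})$. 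This matches \eqref{eq4:diam-acotadoSU} verbatim.

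For $G=\SO(3)$, the lower bound is obtained from $g_{(a,b,b)}$: when $a>b$, the first case of \eqref{eq4:diamBergerSO} gives $\pi/(2b)$; the boundary case $a=b$ is covered by the second case with $c'=b$ and still yields $\pi/(2b)$. The upper bound is obtained from $g_{(b,b,c)}$ with $b\geq c$: the second case of \eqref{eq4:diamBergerSO} applies when $c\geq b/\sqrt{2}$ giving $\pi/(2c)$, while the third case applies when $c<b/\sqrt{2}$ giving the displayed square-root expression in $c^{2}/b^{2}-1$.

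There is no substantive technical obstacle; once the sandwich is set up, the proof reduces to a straightforward case analysis matching the parameter ranges $(a',b',c')$ of Proposition~\ref{prop4:diameterBerger} to the choices $(a,b,b)$ and $(b,b,c)$. The one point requiring care is keeping track of which of the three cases in \eqref{eq4:diamBergerSU}, \eqref{eq4:diamBergerSO} governs each Berger sphere, particularly in the borderline situations $a=\sqrt{2}\,b$ (for $\SU(2)$) and $b=\sqrt{2}\,c$ (for $\SO(3)$), where two of the cases coincide.
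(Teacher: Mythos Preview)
Your proposal is correct and follows exactly the paper's approach: sandwich $g_{\abc}$ between the Berger metrics $g_{(a,b,b)}$ and $g_{(b,b,c)}$ via the componentwise inequality of inner products, apply the monotonicity Lemma~\ref{lem4:diamg1g2}, and read off the explicit Berger diameters from Proposition~\ref{prop4:diameterBerger}. The paper's own proof is in fact terser than yours---it stops at ``Proposition~\ref{prop4:diameterBerger} now completes the proof'' without spelling out the case analysis you provide.
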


\begin{proof}
Let $G$ be either $\SU(2)$ or $\SO(3)$.
By \eqref{eq2:g_matrix}, one has that
\begin{equation*}
g_{(a,b,b)}(X,X)\leq g_{\abc}(X,X)\leq g_{(b,b,c)} (X,X)
\end{equation*}
for all left-invariant vector fields $X$ on $G$.
Lemma~\ref{lem4:diamg1g2} now gives
\begin{equation*}
\op{diam}(G,g_{(a,b,b)})\leq \op{diam}(G,g_{\abc}) \leq \op{diam}(G,g_{(b,b,c)}).
\end{equation*}
Proposition~\ref{prop4:diameterBerger} now completes the proof. 
\end{proof}

\begin{corollary}\label{cor4:diameter*b}
For all $a\geq b\geq c>0$, we have that
\begin{align*}
\frac{\pi}{2b}<&\op{diam}(\SU(2), g_{\abc}) \leq \frac{\pi}{b}
&\text{ and }&& 
\frac{\pi }{2b} \leq &\op{diam}(\SO(3), g_{\abc}) < \frac{\sqrt{3}\pi }{2b}.
\end{align*}
Moreover, the strict (resp.\ non-strict) inequalities are asymptotically sharp (resp.\ sharp). 
\end{corollary}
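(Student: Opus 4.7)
The plan is to derive the corollary directly from Proposition~\ref{prop4:diam-acotado-Berger}, using the exact Berger formulas of Proposition~\ref{prop4:diameterBerger} only to verify sharpness.

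For the $\SU(2)$ bounds, the upper inequality $\op{diam}(\SU(2),g_{\abc})\leq \pi/b$ is already the upper bound recorded in Proposition~\ref{prop4:diam-acotado-Berger}, and it is attained with equality whenever $a=b$, by the first branch of \eqref{eq4:diamBergerSU}. For the strict lower bound, I would split on the two regimes of Proposition~\ref{prop4:diam-acotado-Berger}: if $a\leq\sqrt{2}\,b$, then $\op{diam}(\SU(2),g_{\abc})\geq \pi/a\geq \pi/(\sqrt{2}\,b)>\pi/(2b)$; if $a>\sqrt{2}\,b$, then $\sqrt{1-b^2/a^2}<1$ forces $\frac{\pi}{2b\sqrt{1-b^2/a^2}}>\pi/(2b)$. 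Asymptotic sharpness is immediate: keeping $b=c$ fixed and letting $a\to\infty$, Proposition~\ref{prop4:diameterBerger} gives $\op{diam}(\SU(2),g_{(a,b,b)})=\frac{\pi}{2b\sqrt{1-b^2/a^2}}\to\pi/(2b)$.

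For the $\SO(3)$ bounds, the lower inequality $\pi/(2b)\leq \op{diam}(\SO(3),g_{\abc})$ is the left inequality of Proposition~\ref{prop4:diam-acotado-Berger}, and it is attained whenever $a\geq b=c$ by \eqref{eq4:diamBergerSO}. For the strict upper bound, the plan is to view the upper Berger bound $\op{diam}(\SO(3),g_{(b,b,c)})$ from Proposition~\ref{prop4:diameterBerger} as a function of $c\in(0,b]$ and show it never reaches $\sqrt{3}\pi/(2b)$. On the branch $c\geq b/\sqrt{2}$ the bound equals $\pi/(2c)$, which is at most $\pi/(\sqrt{2}\,b)$ and hence strictly below $\sqrt{3}\pi/(2b)$. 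On the branch $c<b/\sqrt{2}$ a short monotonicity check in the variable $t=c^{2}/b^{2}\in(0,1/2)$ shows that $\frac{\pi}{b}\sqrt{1-\frac{1}{4(1-t)}}$ is strictly increasing as $c$ decreases, with supremum $\sqrt{3}\pi/(2b)$ realized only in the limit $c\to 0$. Asymptotic sharpness then follows by fixing $a=b$ and sending $c\to 0$.

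No step is substantive enough to be called a real obstacle. The only mild care required is verifying that the supremum $\sqrt{3}\pi/(2b)$ obtained in the limit $c\to 0$ actually exceeds the maximum $\pi/(\sqrt{2}\,b)$ on the complementary branch, so that the strict upper constant cannot be sharpened below $\sqrt{3}\pi/(2b)$; this reduces to the numerical comparison $\sqrt{2}<\sqrt{3}$.
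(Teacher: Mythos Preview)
Your proposal is correct and follows essentially the same approach as the paper: both derive the non-strict bounds directly from Proposition~\ref{prop4:diam-acotado-Berger} and obtain the strict bounds by analyzing the Berger bounding functions, showing that the infimum (for $\SU(2)$) and supremum (for $\SO(3)$) are attained only asymptotically as $a\to\infty$ and $c\to 0^+$ respectively. Your treatment is slightly more explicit in handling the two branches of each piecewise formula separately and in recording the comparison $\pi/(\sqrt{2}\,b)<\sqrt{3}\pi/(2b)$, but the argument is the same in substance.
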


\begin{proof}
The non-strict inequalities were already proved in Proposition~\ref{prop4:diam-acotado-Berger}. 
The strict inequality for $\SU(2)$ (resp.\ $\SO(3)$) follows by taking the infimum (resp.\ supremum) of the function at the left in \eqref{eq4:diam-acotadoSU} (resp.\ at the right in \eqref{eq4:diam-acotadoSO}). 
Since this function is decreasing (resp.\ increasing), its infimum (resp.\ supremum) is given by 
\begin{equation*}
\lim_{a\to\infty} \dfrac{\pi}{2b\sqrt{1-b^2/a^2}}=\frac{\pi}{2b}
\qquad \left( \text{ resp.\ } 
\lim_{c\to0^+} \dfrac{\pi}{b}\sqrt{1+\frac{1}{4(c^2/b^2-1)}} = \frac{\sqrt{3}\pi}{2b}
\right).
\end{equation*}
Moreover, these inequalities are asymptotically sharp taking $\abc=(a,b,b)$ with $a\to\infty$ and $\abc=(b,b,c)$ with $c\to0^+$ respectively. 
\end{proof}


\subsection{Estimations}\label{subsec:estimations}
The expression for $\lambda_1(\SU(2),g_{\abc})$ in Theorem~\ref{thm1:SUlambda_1} implies the following result, which refines \cite[Cor.~8.6]{EldredgeGordinaSaloff17} by giving optimal values to the bounds.
The case $\SO(3)$ is also considered. 

\begin{corollary}\label{cor4:lambda1/b^2}
Under the assumption $a\geq b\geq c>0$, we have that 
\begin{align}\label{eq4:SUlambda_1bounded}
2b^2 < \lambda_1(\SU(2),g_{\abc})\leq 8 b^2, \\
\label{eq4:SOlambda_1bounded}
4b^2 < \lambda_1(\SO(3),g_{\abc})\leq 8 b^2. 
\end{align}
Moreover, the inequalities at the right (resp.\ left) are sharp (resp.\ asymptotically sharp). 
\end{corollary}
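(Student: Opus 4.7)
The plan is to deduce the four inequalities directly from Theorems~\ref{thm1:SUlambda_1} and \ref{thm1:SOlambda_1}, since once $\lambda_1$ is identified with a simple algebraic expression in $a,b,c$, each bound reduces to a one-line estimate using the ordering $a\geq b\geq c>0$.

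First, for $\SU(2)$, Theorem~\ref{thm1:SUlambda_1} gives
\begin{equation*}
\lambda_1(\SU(2),g_{\abc})=\min\bigl\{a^{2}+b^{2}+c^{2},\;4(b^{2}+c^{2})\bigr\}.
\end{equation*}
For the upper bound, the inequality $b\geq c$ yields $4(b^{2}+c^{2})\leq 8b^{2}$, and this is already one of the two terms in the minimum, so $\lambda_1(\SU(2),g_{\abc})\leq 8b^{2}$. For the lower bound, $a\geq b$ gives $a^{2}+b^{2}+c^{2}\geq 2b^{2}+c^{2}>2b^{2}$ (since $c>0$), and $4(b^{2}+c^{2})>4b^{2}\geq 2b^{2}$; taking the minimum preserves the strict inequality, so $\lambda_1(\SU(2),g_{\abc})>2b^{2}$.

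Next, for $\SO(3)$, Theorem~\ref{thm1:SOlambda_1} asserts $\lambda_1(\SO(3),g_{\abc})=4(b^{2}+c^{2})$. The assumption $b\geq c$ immediately yields $4(b^{2}+c^{2})\leq 8b^{2}$, and $c>0$ yields $4(b^{2}+c^{2})>4b^{2}$, which are exactly \eqref{eq4:SOlambda_1bounded}.

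Finally, for the sharpness claims, the two upper bounds are attained by any metric with $b=c$: in the $\SU(2)$ case, if moreover $a^{2}\geq 3(b^{2}+c^{2})=6b^{2}$, then the minimum in Theorem~\ref{thm1:SUlambda_1} equals $4(b^{2}+c^{2})=8b^{2}$, and for $\SO(3)$ one has $\lambda_1(\SO(3),g_{(a,b,b)})=8b^{2}$ for every $a\geq b$. For the lower bounds, take $a=b$ fixed and let $c\to 0^{+}$: then $\lambda_1(\SU(2),g_{(b,b,c)})=\min\{2b^{2}+c^{2},4(b^{2}+c^{2})\}=2b^{2}+c^{2}\to 2b^{2}$, and $\lambda_1(\SO(3),g_{(b,b,c)})=4(b^{2}+c^{2})\to 4b^{2}$, showing the strict lower bounds are asymptotically sharp. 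No step is genuinely difficult here; the only point requiring care is to check that both candidates for the minimum in the $\SU(2)$ case lie strictly above $2b^{2}$, which is what forces the strictness of the left inequality in \eqref{eq4:SUlambda_1bounded}.
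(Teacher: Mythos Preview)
Your proof is correct and follows essentially the same approach as the paper: both derive the bounds directly from the explicit formulas in Theorems~\ref{thm1:SUlambda_1} and \ref{thm1:SOlambda_1} via the elementary inequalities $c\leq b\leq a$, and both verify sharpness using the same families ($b=c$ with $a^2\geq 6b^2$ for the upper bound, $a=b$ with $c\to 0^+$ for the lower). The only cosmetic difference is that the paper bounds the minimum below by the single quantity $2b^2+c^2$ (which dominates both candidates), whereas you check each candidate separately against $2b^2$.
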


\begin{proof}
Since $\lambda_1(\SU(2),g_{\abc})=\min\{a^2+b^2+c^2,4(b^2+c^2)\}$ by Theorem~\ref{thm1:SUlambda_1}, it follows that $\lambda_1(\SU(2),g_{\abc})\leq 4(b^2+c^2)\leq 8b^2$ and $\lambda_1(\SU(2),g_{\abc})\geq 2b^2+c^2>2b^2$. 

The equality at the right of \eqref{eq4:SUlambda_1bounded} is attained when $b=c$ and $a^2\geq6b^2$.
Furthermore, by considering $\abc$ such that $a=b\geq c$, we obtain that $\lambda_1\abc=2b^2+c^2$, which approaches to $2b^2$ when $c$ goes to $0$. 
The proof of \eqref{eq4:SOlambda_1bounded} follows in the same way as for \eqref{eq4:SUlambda_1bounded}. 
\end{proof}

It follows immediately from the previous result and Corollary~\ref{cor4:diameter*b} that 
\begin{align}
\pi^2/2 &<\lambda_1(\SU(2),g) \, \op{diam}(\SU(2),g)^2 \leq 8\pi^2,\\
\pi^2 &<\lambda_1(\SO(3),g) \,\op{diam}(\SO(3),g)^2 < 6\pi^2,
\end{align}
for any left-invariant metric $g$ on the corresponding Lie group. 
Although the inequalities at the right (resp.\ left) above were obtained by two sharp (resp.\ asymptotically sharp) inequalities, we will improve them (Theorem~\ref{thm1:conjSU2SO3}) by using better estimates for the diameter. 
We first consider Berger spheres, where we have explicit expressions for the lowest eigenvalue and the diameter.

\begin{corollary}\label{cor4:BergerSU}
For every Berger sphere $(\SU(2),g)$, we have that 
\begin{equation*}
\frac{(1+\sqrt{3}/2)\pi^2}{\op{diam}(\SU(2),g)^2} \leq \lambda_1(\SU(2),g)   \leq \frac{3\pi^2}{\op{diam}(\SU(2),g)^2}. 
\end{equation*}
Moreover, the inequality at the right (resp.\ left) is attained at the round sphere $a=b=c$ (resp.\ when $b=c$ and $a^2=({\sqrt{3}-1}) b^2/2$).
\end{corollary}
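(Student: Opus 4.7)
The plan is to substitute the explicit formulas from Theorem~\ref{thm1:SUlambda_1} and Proposition~\ref{prop4:diameterBerger} directly into the product $\lambda_1(\SU(2),g)\,\op{diam}(\SU(2),g)^2$, reducing the problem to one-variable calculus on a piecewise expression in the ratio $t=a^2/b^2$.

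By permutation invariance of the isometry class (Proposition~\ref{prop2:left-invSU(2)}), every Berger sphere is isometric to some $g_{(a,b,b)}$; under the convention $a\geq b\geq c$ this splits into the \emph{oblate} case $a=b\geq c$ and the \emph{prolate} case $a\geq b=c$. In the oblate case, Theorem~\ref{thm1:SUlambda_1} gives $\lambda_1=2a^2+c^2$ (since $2a^2+c^2\leq 4a^2+4c^2$), and Proposition~\ref{prop4:diameterBerger} gives $\op{diam}=\pi/a$, so the product equals $\pi^2(2+c^2/a^2)\in(2\pi^2,3\pi^2]$. The upper bound $3\pi^2$ is attained precisely when $c=a$, i.e.\ at the round sphere.

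For the prolate case set $t=a^2/b^2\in[1,\infty)$. Proposition~\ref{prop4:diameterBerger} then prescribes three subregimes for $\op{diam}$, while Theorem~\ref{thm1:SUlambda_1} gives $\lambda_1=a^2+2b^2$ for $t\leq 6$ and $\lambda_1=8b^2$ for $t>6$. Multiplying these out one finds that $\lambda_1\,\op{diam}^2/\pi^2$ equals $1+2/t$ on $[1,2]$, the rational function $f(t)=t(t+2)/(4(t-1))$ on $(2,6]$, and $2t/(t-1)$ on $(6,\infty)$. The first and third branches are monotone and take values in $[2,3]$ and $(2,12/5]$ respectively, both well above $1+\sqrt{3}/2$.

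The decisive step is the middle branch. A short derivative calculation gives $f'(t)=(t^2-2t-2)/(4(t-1)^2)$, so the unique critical point of $f$ in $(2,6]$ is $t_0=1+\sqrt{3}$, and $f(t_0)=(1+\sqrt{3})^2/4=1+\sqrt{3}/2$; this is a minimum, and comparing it with the other two branches and with the oblate case shows that it is the global minimum of $\lambda_1\,\op{diam}^2/\pi^2$ over all Berger spheres. It is attained exactly when $b=c$ and $a^2=(1+\sqrt{3})b^2$, equivalently $b^2=(\sqrt{3}-1)a^2/2$. The only substantive obstacle is checking that $t_0=1+\sqrt{3}\approx 2.73$ really lies inside the middle interval $(2,6]$, so that the formula $f(t)$ is genuinely the product there; the remainder is routine case bookkeeping.
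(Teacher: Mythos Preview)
Your proof is correct and follows essentially the same route as the paper's: both arguments substitute the explicit formulas for $\lambda_1$ and $\op{diam}$, split into the same four regimes (oblate; prolate with $t\in[1,2]$, $(2,6]$, $(6,\infty)$), and minimize the resulting one-variable expression on the critical middle branch. The only cosmetic difference is that the paper uses the reciprocal variable $x=b^2/a^2=1/t$, so its $f(x)=\frac{(1+2x)\pi^2}{4x(1-x)}$ on $[1/6,1/2]$ is exactly your $f(t)$ under $x\mapsto 1/t$; the critical point $x=(\sqrt3-1)/2$ corresponds to your $t_0=1+\sqrt3$. You also correctly spotted that the equality condition as printed in the corollary, $a^2=(\sqrt3-1)b^2/2$, is a typo (it would force $a<b$); the paper's own proof confirms your version $b^2=(\sqrt3-1)a^2/2$, i.e.\ $a^2=(1+\sqrt3)b^2$.
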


\begin{proof}
To facilitate the reading we write the explicit expressions from Theorem~\ref{thm1:SUlambda_1} and Proposition~\ref{prop4:diameterBerger} for a Berger sphere as follows: 
\begin{align}\label{eq4:lambda_1-Berger}
\lambda_1(\SU(2),g_{\abc}) &=
\begin{cases}
2b^2+c^2 \quad&\text{if } a=b\geq c,\\
a^2+2b^2 \quad&\text{if } b=c,\, b^2\leq a^2\leq 6b^2,\\
8b^2 \quad&\text{if } b=c,\, a^2\geq 6b^2,
\end{cases}
\\
\label{eq4:diam-Berger}
\op{diam}(\SU(2),g_{\abc})^2 &=
\begin{cases}
\pi^2/b^2 \quad&\text{if } a=b\geq c,\\
\pi^2/a^2 \quad&\text{if } b=c,\; b^2\leq a^2\leq 2b^2,\\
\dfrac{\pi^2}{4b^2(1-b^2/a^2)} \quad&\text{if } b=c,\; a^2 \geq 2b^2.
\end{cases}
\end{align}

We claim that the image of the function 
$$
\abc\mapsto \lambda_1(\SU(2),g_{\abc}) \op{diam}(\SU(2),g_{\abc})^2
$$ 
over the corresponding subsets of the domain are the intervals
\begin{equation}\label{eq4:claim}
\begin{cases}
(2\pi^2, 3\pi^2] \quad& \text{if }a=b\geq c,\\
[2\pi^2, 3\pi^2] \quad& \text{if }b=c,\, b^2\leq a^2\leq 2b^2,\\
[\frac{1+\sqrt{3}}2\pi^2, \frac{12}{5} \pi^2] \quad& \text{if }b=c,\, 2b^2\leq a^2\leq 6b^2,\\
(2\pi^2, \frac{12}{5} \pi^2] \quad& \text{if }b=c,\, 6b^2\leq a^2.
\end{cases}
\end{equation}
This would imply the first assertion, and its proof also the second one. 

If $a=b\geq c$, then $\lambda_1(\SU(2),g_{\abc}) \op{diam}(\SU(2),g_{\abc})^2 =\frac{2b^2+c^2}{b^2} \,  \pi^2$ by \eqref{eq4:lambda_1-Berger} and \eqref{eq4:diam-Berger}.
Moreover, its maximum is $3\pi^2$ attained when $c=b$ and its infimum is $2\pi^2$ asymptotically attained when $c$ goes to $0$. 
The second and fourth row in \eqref{eq4:claim} follow in a similarly straightforward way as the first one  by \eqref{eq4:lambda_1-Berger} and \eqref{eq4:diam-Berger}. 

Suppose that $b=c$ and $2b^2\leq a^2\leq 6b^2$. 
Then, $\lambda_1(\SU(2),g_{\abc}) \op{diam}(\SU(2),g_{\abc})^2$  equals
\begin{equation}
\frac{(a^2+2b^2)\pi^2}{4b^2(1-b^2/a^2)} = \frac{(1+2b^2/a^2)\pi^2}{4b^2/a^2(1-b^2/a^2)}
= \frac{(1+2x)\pi^2}{4x(1-x)} =:f(x),
\end{equation}
where $x=b^2/a^2$. 
By assumption, $1/6\leq x\leq 1/2$. 
A simple study of this function shows that 
$$
(1+\sqrt{3}/2)\pi^2 =f(\tfrac{\sqrt{3}-1}2)\leq f(x)\leq f(1/6)=12\pi^2/5
$$ 
for all $1/6\leq x\leq 1/2$. 
This completes the proof of \eqref{eq4:claim} and the corollary. 
\end{proof}


We are now in a position to prove the main result of this section.

\begin{proof}[Proof of Theorem~\ref{thm1:conjSU2SO3}]
The upper bound in \eqref{eq1:conjSU(2)} as well as the lower bound in \eqref{eq1:conjSO(3)} follow immediately by Corollaries~\ref{cor4:diameter*b} and \ref{cor4:lambda1/b^2}.
Throughout the proof, Theorems~\ref{thm1:SUlambda_1} and \ref{thm1:SOlambda_1} as well as Proposition~\ref{prop4:diam-acotado-Berger} will be used repeatedly times without any comment, to give expressions for $\lambda_1(\SU(2),g_{\abc})$ or bounds to $\op{diam}(G,g_{\abc})^2$. 

In order to establish the lower bound in \eqref{eq1:conjSU(2)} (i.e.\ $G=\SU(2)$) we divide the proof depending the position of $a^2$ with respect to $2b^2$ and $3(b^2+c^2)$. 

If $a^2\leq 2b^2$, then $\lambda_1(\SU(2),g_{\abc})=a^2+b^2+c^2$ and 
$
\op{diam}(\SU(2),g_{\abc})^2 \geq  \pi^2/a^2,
$
thus $\lambda_1(\SU(2),g_{\abc})\op{diam}(\SU(2),g_{\abc})^2$ is greater than or equal to 
$$
\frac{a^2+b^2+c^2}{a^2}\, \pi^2 >  (1+ {b^2}/{a^2})\, \pi^2 \geq \frac{3\pi^2}2.
$$

Suppose now that $2b^2\leq a^2\leq 3(b^2+c^2)$. 
We have that $\lambda_1(\SU(2),g_{\abc})=a^2+b^2+c^2$ and 
$
\op{diam}(\SU(2),g_{\abc})^2 \geq \frac{\pi^2}{4b^2(1-b^2/a^2)}.
$
We deduce that $\lambda_1(\SU(2),g_{\abc})\op{diam}(\SU(2),g_{\abc})^2$ is greater than or equal to 
$$
\frac{(a^2+b^2+c^2)\pi^2}{4b^2(1-b^2/a^2)} \geq
\frac{\pi^2}{3}\, \frac{a^2}{b^2(1-b^2/a^2)}\geq 
\frac{\pi^2}{3}\, \frac{1}{b^2/a^2(1-b^2/a^2)}\geq \frac{4\pi^2}{3},
$$
since the map $x\mapsto ({x(1-x)})^{-1}$ restricted to $1/6\leq x\leq 1/2$ attains its minimum at $x=1/2$. 

If $a^2\geq 3(b^2+c^2)$,  $\lambda_1(\SU(2),g_{\abc})=4(b^2+c^2)$ and $
\op{diam}(\SU(2),g_{\abc})^2 \geq \frac{\pi^2}{4b^2(1-b^2/a^2)}
$, 
thus $\lambda_1(\SU(2),g_{\abc})\op{diam}(\SU(2),g_{\abc})^2$ is greater than or equal to 
$$
\frac{4(b^2+c^2)\pi^2}{4b^2(1-b^2/a^2)} >
\frac{\pi^2}{(1-b^2/a^2)}
> \pi^2.
$$

Combining the last three inequalities, we conclude that 
$$
\lambda_1(\SU(2),g_{\abc})> \frac{\pi^2}{\op{diam}(\SU(2),g_{\abc})^2}
$$
as asserted. This completes the proof of \eqref{eq1:conjSU(2)}.

It only remains to establish the upper bound in \eqref{eq1:conjSO(3)} (i.e.\ $G=\SO(3)$). 
In this case, we always have $\lambda_1(\SO(3),g_{\abc})=4(b^2+c^2)$, thus it will be convenient to divide the proof according to the position of $c^2$ with respect to $b^2/2$. 
If $2c^2\geq b^2$, then  
$
\op{diam}(\SO(3),g_{\abc})^2 \leq  \frac{\pi^2}{4c^2},
$
thus $\lambda_1(\SU(2),g_{\abc})\op{diam}(\SU(2),g_{\abc})^2$ is smaller than or equal to 
$$
\frac{4(b^2+c^2)\pi^2}{4c^2}=(b^2/c^2+1)\pi^2  \leq 3\pi^2.
$$

We now assume $2c^2\leq b^2$. 
This gives 
\begin{equation*}
\lambda_1(\SO(3),g_{\abc})
\op{diam}(\SO(3),g_{\abc})^2 
\leq  
\frac{\pi^2(1+c^2/b^2)(4c^2/b^2-3)}{c^2/b^2-1}
\leq (9-4\sqrt{2})\pi^2.
\end{equation*}
The last inequality follows by a standard study of the maximum of the function $x\mapsto \frac{(1+x)(4x-3)}{x-1}$ restricted to $0< x\leq 1/2$. 
One can check that such a maximum is attained at $x=1-1/\sqrt{2}$. 

Combining the last two inequalities, we conclude that 
$$
\lambda_1(\SO(3),g_{\abc})\leq  \frac{(9-4\sqrt{2})\pi^2}{\op{diam}(\SO(3),g_{\abc})^2}
$$
as asserted, and the proof of Theorem~\ref{thm1:conjSU2SO3} is complete. 
\end{proof}

We conclude this section by considering left-invariant metrics on products of $\SU(2)$ and $\SO(3)$ given by the product of left-invariant metrics in each of the components.

\begin{proposition} \label{prop4:estimate-product}
Let $m$ and $n$ be non-negative integers.
Let 
$$
G=
\underbrace{\SU(2)\times\dots\times \SU(2)}_{m\text{-times}} \times \underbrace{\SO(3) \times\dots\times \SO(3)}_{n\text{-times}}
$$
endowed with the left-invariant metric $g$ given by 
$$
g_{(a_1,b_1,c_1)} \times \dots \times g_{(a_{m},b_{m},c_{m})}  
\times 
g_{(a_1',b_1',c_1')} \times \dots \times g_{(a_{n}',b_{n}',c_{n}')},
$$
where $a_i\geq b_i\geq c_i>0$ for all $1\leq i \leq m$ and $a_j'\geq b_j'\geq c_j'>0$ for all $1\leq j \leq n$.
Then
\begin{equation}\label{eq4:estimateproduct}
\frac{\pi^2}{\op{diam}(G,g)^2} <\lambda_1(G,g)  \leq  \frac{(8m+6n)\pi^2}{\op{diam}(G,g)^2}.
\end{equation}
\end{proposition}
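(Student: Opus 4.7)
The proof will rest on two elementary facts about Riemannian products, applied inductively to the $m+n$ factors. First, the Riemannian product distance is the Euclidean sum of the factor distances, so
\[
\op{diam}(G,g)^2 = \sum_{i=1}^{m} \op{diam}(\SU(2),g_{(a_i,b_i,c_i)})^2 + \sum_{j=1}^{n} \op{diam}(\SO(3),g_{(a_j',b_j',c_j')})^2.
\]
Second, the Laplacian of a product decomposes as $\Delta_{g_1} \otimes I + I \otimes \Delta_{g_2}$, so its spectrum is the sumset of the factor spectra; in particular, the smallest non-zero eigenvalue of a product is the minimum of the smallest non-zero eigenvalues of the factors. Writing $\mu_k$ for the lowest positive eigenvalue and $d_k$ for the diameter of the $k$-th factor (relabelling $k=1,\dots,m+n$), this yields $\lambda_1(G,g) = \min_k \mu_k$ and $\op{diam}(G,g)^2 = \sum_k d_k^2$.

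For the upper bound, I would apply Theorem~\ref{thm1:conjSU2SO3} to each factor, obtaining $\mu_k d_k^2 \leq 8\pi^2$ when the $k$-th factor is $\SU(2)$ and $\mu_k d_k^2 \leq (9-4\sqrt{2})\pi^2 < 6\pi^2$ when it is $\SO(3)$. Since $\lambda_1(G,g) \leq \mu_k$ for every $k$, one then sums:
\[
\lambda_1(G,g)\,\op{diam}(G,g)^2 = \sum_k \lambda_1(G,g)\, d_k^2 \;\leq\; \sum_k \mu_k d_k^2 \;\leq\; 8m\pi^2 + 6n\pi^2,
\]
which is the asserted upper bound.

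For the strict lower bound, let $k_0$ be an index achieving $\mu_{k_0} = \min_k \mu_k = \lambda_1(G,g)$. Discarding the non-negative contributions of the other factors to the total squared diameter gives $\op{diam}(G,g)^2 \geq d_{k_0}^2$, and Theorem~\ref{thm1:conjSU2SO3} applied to that single factor yields $\mu_{k_0} d_{k_0}^2 > \pi^2$; therefore
\[
\lambda_1(G,g)\,\op{diam}(G,g)^2 \geq \mu_{k_0} d_{k_0}^2 > \pi^2.
\]
Once the two product-structure identities are set up, the argument is routine, so I do not anticipate a genuine obstacle; the only care required is keeping the strict inequality on the left of \eqref{eq4:estimateproduct} intact by invoking the strict lower bound of Theorem~\ref{thm1:conjSU2SO3} at the single dominating factor rather than trying to sum strict inequalities.
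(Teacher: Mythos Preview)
Your argument is correct. The lower-bound half is essentially identical to the paper's: pick the factor realizing $\min_k \mu_k$, drop the other summands from the squared diameter, and invoke the strict lower bound of Theorem~\ref{thm1:conjSU2SO3} on that single factor.

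For the upper bound you take a slightly different route. The paper does not sum the per-factor inequalities $\mu_k d_k^2 \leq 8\pi^2$ (resp.\ $\leq (9-4\sqrt{2})\pi^2$); instead it sets $B=\min\{b_1,\dots,b_m,b_1',\dots,b_n'\}$ and bounds the two factors of the product separately via the intermediate Corollaries~\ref{cor4:diameter*b} and~\ref{cor4:lambda1/b^2}: $\lambda_1(G,g)\leq 8B^2$ and $\op{diam}(G,g)^2\leq (m+\tfrac{3n}{4})\pi^2/B^2$, whose product is exactly $(8m+6n)\pi^2$. Your approach is more streamlined, citing only the already-packaged Theorem~\ref{thm1:conjSU2SO3}, and in fact yields the sharper constant $(8m+(9-4\sqrt{2})n)\pi^2$ before you weaken $9-4\sqrt{2}$ to $6$ to match the stated bound. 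The paper's approach, by contrast, makes transparent exactly where the $8$ and the $6$ come from (the eigenvalue bound $8b^2$ and the diameter bound $\tfrac{3\pi^2}{4b^2}$ for $\SO(3)$), at the cost of one extra auxiliary parameter.
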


\begin{proof}
It is well known that every eigenvalue of the Laplace--Beltrami operator on a product of closed compact Riemannian manifolds $(M,g):=(M_1,g_1)\times \dots\times (M_n,g_n)$ is of the form $\lambda^{(1)}+\dots+\lambda^{(n)}$, for $\lambda^{(j)}$ in $\spec(M_j,g_j)$. 
For any $j$, since $M_j$ is compact, constant functions on $M_j$ are eigenfunctions of $\Delta_{g_j}$ with eigenvalue $0$.
Hence $0\in\spec(M_j,g_j)$ for all $j$, thus the smallest non-zero eigenvalue of $\Delta_g$ is given by 
\begin{equation}\label{eq4:lambda_1producto}
\min\left\{\lambda_1(M_1,g_1),\dots,\lambda_1(M_n,g_n) \right\}.
\end{equation}
The explicit expressions for $\lambda_1(\SU(2),g_{(a_i,b_i,c_i)})$  and $\lambda_1(\SO(3),g_{(a_j',b_j',c_j')})$ gives \begin{equation}\label{eq4:lambda_1producto2}
\lambda_1(G,g) = \min\left\{ 
\{a_i^2+b_i^2+c_i^2,4(b_i^2+c_i^2):1\leq i\leq m\} 
\cup
\{4(b_j'^2+c_j'^2):1\leq j\leq n\}
\right\}.
\end{equation}

For arbitrary closed Riemannian manifolds $(M_i,g_i)$, it is a simple matter to check that 
\begin{equation}\label{eq4:diameter-producto}
\op{diam}\big((M_1,g_1)\times \dots\times (M_n,g_n)\big)^2 = \sum_{i=1}^n \op{diam}(M_i,g_i)^2>\op{diam}(M_j,g_j)^2
\end{equation}
for all $j$. 
Consequently, \eqref{eq4:lambda_1producto} clearly forces that $\lambda_1(G,g)\op{diam}(G,g)^2$ is greater than the minimum among the numbers 
\begin{align*}
\lambda_1(\SU(2),g_{(a_i,b_i,c_i)})\op{diam}(\SU(2),g_{(a_i,b_i,c_i)})^2
&\quad\text{for }1\leq i\leq m, \text{ and}\\
\lambda_1(\SO(3),g_{(a_j',b_j',c_j')})\op{diam}(\SO(3),g_{(a_j',b_j',c_j')})^2
&\quad\text{for }1\leq j\leq n.                                                                                             \end{align*}
Since each of them is greater than $\pi^2$ by Theorem~\ref{thm1:conjSU2SO3}, we obtain the lower bound in \eqref{eq4:estimateproduct}.

Set $B=\min\{b_1,\dots,b_{m},b_{1}',\dots,b_{n}'\}$. 
Similarly as in Corollary~\ref{cor4:lambda1/b^2}, 
\eqref{eq4:lambda_1producto2} gives $\lambda_1(G,g) \leq 8 B^2$. 
Furthermore, \eqref{eq4:diameter-producto} and Corollary~\ref{cor4:diameter*b} show that 
\begin{equation}\label{eq4:diam/B}
\op{diam}(G,g)^2 
\leq  \sum_{i=1}^{m} \frac{\pi^2}{b_i^2} +\sum_{j=1}^{n} \frac{3\pi^2}{4b_j'^2} 
\leq  \frac{(m+3n/4)\pi^2}{B^2},
\end{equation}
which completes the proof. 
\end{proof}

\begin{example} \label{ex4:no-unif-upper-bound} (\textsl{There is no uniform bound on $C_G$ in Conjecture~\ref{conj:EGS}.})
For each positive integer $n$, set $G_n=\SU(2)^n$.
Let $g_n$ be the left-invariant metric on $G_n$ given by 
$
g_{(1,1,1)} \times \dots \times g_{(1,1,1)}.
$
In other words, $(G_n,g_n)$ is the product of $n$-copies of the round $3$-sphere of radius $1$. 
Hence, $\lambda_1(G_n,g_n)= 3$ by \eqref{eq4:lambda_1producto2} and $\op{diam}(G_n,g_n)^2=n\pi^2$ by \eqref{eq4:diameter-producto}. 
This clearly forces that $C_{G_n}\geq 3n\pi^2$ which goes to infinity when $n$ does it. 
\end{example}


\section{Global spectral rigidity}\label{sec:uniquiness}
In this section we prove Theorem~\ref{thm1:unicidad}, which ensures that two isospectral left-invariant metrics on $\SU(2)$ or $\SO(3)$ are necessarily isometric. 
This result has been already shown by Schmidt and Sutton~\cite{SchmidtSutton13} by using heat invariants.
They proved that the first four heat invariants distinguish the isometry class of every left-invariant metric on $\SU(2)$.

To prove Theorem~\ref{thm1:unicidad} we will show in  Theorem~\ref{thm5:unicidad} that any left-invariant metric on $\SU(2)$ or $\SO(3)$ is distinguished by the volume, the scalar curvature and the lowest non-zero eigenvalue with its multiplicity, which are spectral invariant by the next well-known result.

\begin{proposition}\label{prop5:specinvariants}
Let $G$ be either $\SU(2)$ or $\SO(3)$.
If $\spec(G,g_{\abc}) = \spec(G,g_{(a',b',c')})$, then 
$
\op{vol}(G,g_{\abc})=\op{vol}(G,g_{(a',b',c')})$ and $\op{Scal}(G,g_{\abc})=\op{Scal}(G,g_{(a',b',c')}).
$
\end{proposition}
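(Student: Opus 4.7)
The plan is to invoke the classical heat kernel asymptotic expansion, from which both invariants fall out at once. Recall that for any closed Riemannian manifold $(M,g)$ of dimension $n$, the heat trace has the Minakshisundaram--Pleijel expansion
\begin{equation*}
Z_g(t):=\sum_{\lambda\in\spec(M,g)} e^{-\lambda t} \sim (4\pi t)^{-n/2} \sum_{k\geq 0} a_k(M,g)\, t^k \qquad (t\to 0^+),
\end{equation*}
where the coefficients $a_k(M,g)$ are integrals of universal polynomial expressions in the curvature tensor and its covariant derivatives. Since $Z_g(t)$ is determined by $\spec(M,g)$, so is each $a_k(M,g)$. The first two such heat invariants are
\begin{equation*}
a_0(M,g) = \op{vol}(M,g), \qquad a_1(M,g) = \frac{1}{6}\int_M \op{Scal}(M,g)\, dV_g.
\end{equation*}

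With this in hand, suppose $\spec(G,g_{\abc}) = \spec(G,g_{(a',b',c')})$ for $G=\SU(2)$ or $\SO(3)$. First I would equate the $a_0$ coefficients, which immediately gives $\op{vol}(G,g_{\abc}) = \op{vol}(G,g_{(a',b',c')})$. Next, since $(G,g_{\abc})$ is homogeneous, the scalar curvature is constant on $G$ (cf.\ Proposition~\ref{prop2:Ricci+scal}), so
\begin{equation*}
a_1(G,g_{\abc}) = \tfrac{1}{6}\, \op{Scal}(G,g_{\abc})\cdot \op{vol}(G,g_{\abc}),
\end{equation*}
and likewise for the primed metric. Equating the $a_1$ coefficients and dividing by the (already equal, nonzero) volumes yields $\op{Scal}(G,g_{\abc}) = \op{Scal}(G,g_{(a',b',c')})$, which completes the argument.

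There is no real obstacle here: both facts are just the first two terms of the standard heat kernel expansion applied to a manifold where the scalar curvature is constant by homogeneity. The only thing to be mindful of is the legitimacy of interpreting $a_1$ as proportional to $\op{Scal}\cdot\op{vol}$, which requires the homogeneity of the metric --- a property that holds here precisely because $g_{\abc}$ is left-invariant on a Lie group.
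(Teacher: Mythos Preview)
Your proof is correct and follows essentially the same approach as the paper: both invoke the first two heat invariants to recover the volume and total scalar curvature, then use homogeneity (constant scalar curvature) to divide through and obtain the scalar curvature itself. Your version is simply more explicit about the Minakshisundaram--Pleijel expansion and the exact formulas for $a_0$ and $a_1$.
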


\begin{proof}
It is well known that the spectrum of an arbitrary compact Riemannian manifold determines its volume and its total scalar curvature. 
This information is obtained by the first and second heat invariants respectively.
Furthermore, when the manifold is homogeneous (e.g.,\ a compact Lie group endowed with a left-invariant metric), the scalar curvature is constant.
Hence, the scalar curvature is equal to the total scalar curvature divided by the volume. 
Since both terms are spectral invariants, so is the scalar curvature. 
\end{proof}


\begin{theorem}\label{thm5:unicidad}
Let $G$ be either $\SU(2)$ or $\SO(3)$.
The volume, the scalar curvature, and the smallest non-zero eigenvalue with its multiplicity mutually distinguish isometry classes of left-invariant metrics on $G$. 
\end{theorem}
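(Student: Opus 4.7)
\emph{The plan} is to show that if $g_{\abc}$ and $g_{(a',b',c')}$ (in the standard ordering $a\geq b\geq c>0$ and similarly for the primed triple, cf.\ Proposition~\ref{prop2:left-invSU(2)}) share the same volume, scalar curvature, and lowest non-zero eigenvalue with multiplicity, then $(a,b,c)=(a',b',c')$. Setting $\alpha=a^2$, $\beta=b^2$, $\gamma=c^2$, by \eqref{eq2:vol=abc} the volume determines $V:=\alpha\beta\gamma$, and by Proposition~\ref{prop2:Ricci+scal} the scalar curvature takes the compact form $\op{Scal}=4(\alpha+\beta+\gamma)-2R/V$ with $R:=\alpha^2\beta^2+\beta^2\gamma^2+\gamma^2\alpha^2$. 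The task is then to recover $(\alpha,\beta,\gamma)$ from the data $(V,\lambda_1,m_1,\op{Scal})$.

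For $G=\SU(2)$, Theorem~\ref{thm1:SUlambda_1} partitions the parameter space by the multiplicity of $\lambda_1$ into three disjoint regions: multiplicity $4$ (when $\alpha<3(\beta+\gamma)$), $7$ (when $\alpha=3(\beta+\gamma)$), and $3$ (when $\alpha>3(\beta+\gamma)$); since multiplicity is preserved, both metrics must lie in the same region. In the multiplicity-$4$ region, $\lambda_1=\alpha+\beta+\gamma$ supplies the first elementary symmetric function $\sigma_1$, the volume supplies $\sigma_3$, and the identity $(\alpha\beta+\beta\gamma+\gamma\alpha)^2=R+2\sigma_1V$ together with the scalar curvature determines $\sigma_2>0$; the parameters $\alpha,\beta,\gamma$ are then the roots of the explicit cubic $X^3-\sigma_1X^2+\sigma_2X-\sigma_3$, hence uniquely determined by the ordering. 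The multiplicity-$7$ boundary is immediate because $\lambda_1=\alpha+\beta+\gamma=4(\beta+\gamma)$ gives both $P:=\beta+\gamma=\lambda_1/4$ and $\alpha=3P$; the volume then determines $\beta\gamma=V/\alpha$, and $\beta,\gamma$ follow.

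The main case, handled uniformly, is the multiplicity-$3$ region for $\SU(2)$ together with the generic ($\alpha>\beta$) case for $\SO(3)$ from Theorem~\ref{thm1:SOlambda_1}. In both, $\lambda_1=4(\beta+\gamma)$, so $P=\lambda_1/4$ is known, and $\beta\gamma=V/\alpha$. A direct substitution in the scalar curvature reduces it to $\op{Scal}-4P=f(\alpha)$, where
\[
f(\alpha)=8\alpha-\frac{2\alpha^2P^2}{V}-\frac{2V}{\alpha^2}.
\]
I would then argue that $f$ is strictly decreasing on the admissible range of $\alpha$, so $\alpha$ is uniquely determined, after which $\beta,\gamma$ emerge as the roots of $X^2-PX+V/\alpha$. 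Differentiating, the inequality $f'(\alpha)<0$ is equivalent to $2V+V^2/\alpha^3<\alpha P^2$; applying the AM-GM bound $V=\alpha\beta\gamma\leq\alpha P^2/4$ (using $\beta\gamma\leq(\beta+\gamma)^2/4$) reduces this further to $P^2<8\alpha^2$. This last inequality holds because $\alpha>3P$ in the $\SU(2)$ subcase and $\alpha>\beta\geq P/2$ in the $\SO(3)$ subcase.

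The two remaining multiplicities for $\SO(3)$ are then straightforward. Multiplicity $9$ forces $\alpha=\beta=\gamma$, uniquely determined by volume. Multiplicity $6$ forces $\alpha=\beta>\gamma$; here $\gamma=V/\alpha^2$ and the scalar curvature reduces to $h(\alpha)=8\alpha-2\alpha^4/V$, with $h'(\alpha)=8(1-\alpha^3/V)<0$ on $\alpha>\gamma$ (that is, $\alpha^3>V$), so $\alpha$ is determined and $\gamma$ follows. The main technical obstacle is the monotonicity of $f$ in the key case: without the AM-GM bound on $V$ and the geometric inequality on $\alpha/P$ furnished by the multiplicity information, $f$ would not be monotone on its natural domain, and the uniqueness of $\alpha$ would have to be established by a more delicate argument on the quartic obtained by clearing denominators.
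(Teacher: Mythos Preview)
Your proof is correct and takes a genuinely different route from the paper in the main case.

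In the multiplicity-$4$ (and $7$) region for $\SU(2)$, your argument via elementary symmetric functions is essentially equivalent to the paper's: there one eliminates variables directly and arrives at the factorisation $(x^2-a^2)(x^2-b^2)(x^2-c^2)=0$, which is just the statement that $\sigma_1,\sigma_2,\sigma_3$ determine the unordered triple.

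The real divergence is in the multiplicity-$3$ region (and the generic $\SO(3)$ case). The paper eliminates variables to obtain a polynomial identity of the form $(x^2-a^2)\,g(x)=0$, where $g$ is a degree-six polynomial in $x$; it then performs a direct sign analysis of $g$, showing $g(\sqrt[3]{abc})>0$ and using the shape of $g$ to force any spurious positive root below $\sqrt[3]{abc}$, which contradicts the volume constraint. Your approach instead isolates the single unknown $\alpha$ in the closed form $\op{Scal}-4P=f(\alpha)=8\alpha-2\alpha^2P^2/V-2V/\alpha^2$ and proves $f$ is strictly decreasing on the admissible range via the AM--GM bound $V\le\alpha P^2/4$ combined with the geometric inequality $P^2<8\alpha^2$ supplied by the multiplicity information. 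This is cleaner and more conceptual: it replaces the somewhat ad hoc root-location argument for $g$ by a single monotonicity check, and it treats the $\SU(2)$ multiplicity-$3$ case and the $\SO(3)$ multiplicity-$3$ case uniformly. The paper's polynomial factorisation, on the other hand, makes the algebraic structure more explicit and does not require identifying the correct one-variable reduction in advance. Both arguments ultimately rely on the same spectral input (volume, scalar curvature, $\lambda_1$ with multiplicity), so neither uses more information than the other.

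One small point worth making explicit in your write-up: your derivative estimate shows $f'(t)<0$ at any $t$ satisfying $t\ge 4V/P^2$ and $t>P/(2\sqrt{2})$. Since both $\alpha$ and $\alpha'$ satisfy these (the first from $\beta\gamma\le P^2/4$, the second from $\alpha>3P$ or $\alpha>\beta\ge P/2$), and both conditions define rays, every $t$ between $\alpha$ and $\alpha'$ also satisfies them; hence $f$ is strictly decreasing on the whole interval $[\min(\alpha,\alpha'),\max(\alpha,\alpha')]$, which is what you need.
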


\begin{proof}
We fix a left-invariant metric $g_{\abc}$ on $\SU(2)$ with $a\geq b \geq c>0$. 
Let $g_{(x,y,z)}$ be a left-invariant metric on $\SU(2)$ such that $x\geq y\geq z>0$ and it has the same volume, scalar curvature and smallest non-zero eigenvalue with its multiplicity as $g_{\abc}$. 
We want to show that $(x,y,z)=\abc$. 
By \eqref{eq2:vol=abc} and Proposition~\ref{prop2:Ricci+scal}, we have that 
\begin{align}
abc&=xyz, \label{eq5:vol}\\
2(a^2+b^2+c^2)-\frac{(ab)^4+(ac)^4+(bc)^4}{(abc)^2} &=
2(x^2+y^2+z^2)-\frac{(xy)^4+(xz)^4+(yz)^4}{(xyz)^2}.\label{eq:scal}
\end{align}

We first consider the case $a^2\leq  3(b^2+c^2)$.
We have $a^2+b^2+c^2\leq 4(b^2+c^2)$, thus Theorem~\ref{thm1:SUlambda_1} yields that $\lambda_1(\SU(2),g_{\abc})=a^2+b^2+c^2$ with multiplicity four or seven according $a^2+b^2+c^2< 4(b^2+c^2)$ or $a^2+b^2+c^2= 4(b^2+c^2)$ respectively. 
By assumption, $\lambda_1(\SU(2),g_{\xyz})=a^2+b^2+c^2$ with multiplicity four or seven in $\spec(\SU(2),g_{\xyz})$, thus
\begin{equation}\label{eq5:lambda1caso1}
a^2+b^2+c^2 = x^2+y^2+z^2
\end{equation}
and $x^2\leq 3(y^2+z^2)$ by Theorem~\ref{thm1:SUlambda_1}, since otherwise, $x^2> 3(y^2+z^2)$ gives multiplicity three to the first non-zero eigenvalue.
To prove this case, we need to show that \eqref{eq5:vol}, \eqref{eq:scal}, \eqref{eq5:lambda1caso1}, $a\geq b \geq c>0$ and $x\geq y \geq z>0$ imply $\xyz=\abc$.

Substituting \eqref{eq5:vol} and \eqref{eq5:lambda1caso1} into \eqref{eq:scal}, $(ab)^4+(ac)^4+(bc)^4 = (xy)^4+(xz)^4+(yz)^4$. 
Thus
\begin{align*}
((ab)^2+(ac)^2+(bc)^2)^2 
  &= (ab)^4+(ac)^4+(bc)^4 + 2(abc)^2(a^2+b^2+c^2) \\
  &= (xy)^4+(xz)^4+(yz)^4 + 2(xyz)^2(x^2+y^2+z^2) \\
  &= ((xy)^2+(xz)^2+(yz)^2)^2.
\end{align*}
Since both numbers are positive, we obtain that 
\begin{align*}
(ab)^2+(ac)^2+(bc)^2
  &= (xy)^2+(xz)^2+(yz)^2 = x^2(y^2+z^2)+(yz)^2 \\
  &= x^2(a^2+b^2+c^2-x^2)+ \left(\tfrac{abc}{x}\right)^2.
\end{align*}
Hence,
\begin{align*}
0 &=x^4(a^2+b^2+c^2-x^2)+ (abc)^2 - x^2((ab)^2+(ac)^2+(bc)^2) \\
&= -x^6 +x^4(a^2+b^2+c^2)-x^2(a^2b^2+a^2c^2+b^2c^2)+a^2b^2c^2 \\
&= -(x^2-a^2)(x^2-b^2)(x^2-c^2).
\end{align*}
Since all parameters are positive, $x$ must be $a$, $b$ or $c$. 

If $x=a$, then $b^2+c^2=y^2+z^2$ and $bc=xy$ by \eqref{eq5:vol} and \eqref{eq5:lambda1caso1}, thus $(b+c)^2=(y+z)^2$ and $(b-c)^2=(y-z)^2$, thus $b+c=y+z$ and $b-c=y-z$, therefore $y=b$ and $z=c$ as required. 
When $x=b$ or $x=c$ is assumed, the same procedure as above will show that $\xyz$ is a permutation of $\abc$, but then the assumptions $a\geq b \geq c>0$ and $x\geq y \geq z>0$ imply a that $\xyz=\abc$ or a contradiction in case $a\neq b$ or $a\neq c$ respectively. 
This concludes the proof of the case $a^2< 3(b^2+c^2)$.

We now assume $a^2> 3(b^2+c^2)$, thus $4(b^2+c^2)<a^2+b^2+c^2$ and Theorem~\ref{thm1:SUlambda_1} shows that $\lambda_1(\SU(2),g_{\abc}) = 4(b^2+c^2)$ with multiplicity three in $\spec(\SU(2),g_{\abc})$. 
Similarly as in the previous case, Theorem~\ref{thm1:SUlambda_1} ensures that 
\begin{align}\label{eq5:lambda1caso2}
4(b^2+c^2) &= 4(y^2+z^2)\\
x^2 &> 3(y^2+z^2). \label{eq:condicioncaso2}
\end{align}

Combining \eqref{eq5:vol} and \eqref{eq5:lambda1caso2} with \eqref{eq:scal} we deduce that 
\begin{equation*}
2a^2 -\frac{(ab)^4+(ac)^4+(bc)^4}{(abc)^2} =
2x^2 -\frac{(xy)^4+(xz)^4+(yz)^4}{(abc)^2}.
\end{equation*}
By standard manipulations, still using \eqref{eq5:vol} and \eqref{eq5:lambda1caso2}, we obtain that 
\begin{align*}
0&=2(abc)^2(x^2-a^2)+(ab)^4+(ac)^4+(bc)^4-x^4((y^2+z^2)^2-2y^2z^2) -(yz)^4\\
&=2(abc)^2(x^2-a^2)+(ab)^4+(ac)^4+(bc)^4-x^4(b^2+c^2)^2+2x^2(abc)^2-\frac{(abc)^4}{x^4}\\
&= -(x^2-a^2)\Big(x^6(b^2+c^2)^2+x^4a^2(b^2-c^2)^2- b^4c^4(x^2+a^2)\Big).
\end{align*}
Thus $x^2-a^2=0$ or $x^6(b^2+c^2)^2+x^4a^2(b^2-c^2)^2- b^4c^4(x^2+a^2)=0$. 
Since $x$ is positive, the first case gives $x=a$, which implies that $y=b$ and $z=c$ as above. 

It remains to prove that the fact of $x$ being a root of 
$$
g(t):=t^6(b^2+c^2)^2+t^4a^2(b^2-c^2)^2- b^4c^4(t^2+a^2)
$$ 
gives a contradiction
We proceed to make a simple study of the polynomial $g(t)$ as in a first course of calculus. 
It is a simple matter to check the following properties: $g(t)$ is an even function, $g(a)=a^2b^4(a^4-c^4)+a^2c^4(a^4-b^4)>0$, $g''(t)=0$ has two (opposite) solutions, $g$ has a local maximum at $t=0$ with $g(0)=-a^2b^4c^4<0$ and exactly one local minimum at some point in the interval $(0,a)$.
On account of the above information, we conclude that $g(t)$ has exactly one positive root, say $x$. 
We have that
\begin{align*}
	g(\sqrt[3]{abc}) 
	&=(abc)^2(b^2+c^2)^2+(abc)^{4/3}a^2(b^2-c^2)^2- b^4c^4((abc)^{2/3}+a^2) \\
	&\geq (abc)^2(b^2+c^2)^2- 2a^2b^4c^4 
	= (abc)^2 (b^4+c^4)>0. 
\end{align*}
Then $x<\sqrt[3]{abc}$, thus $xyz\leq x^3 <abc$, contrary to \eqref{eq5:vol}. 
This concludes the proof for $\SU(2)$. 

The proof for $\SO(3)$ is actually contained above. 
Indeed, Theorem~\ref{thm1:SOlambda_1} gives immediately \eqref{eq5:lambda1caso2}.
Hence, by proceeding as above, \eqref{eq5:vol}, \eqref{eq:scal} and \eqref{eq5:lambda1caso2} imply that $\abc=\xyz$
\end{proof}


\section{Locally rigid homogeneous solutions of the Yamabe problem} \label{sec:Yamabe}

In this short section, we prove Theorem~\ref{thm1:Yamabe}. 
The following result is essential.

\begin{proposition}\label{prop6:SUScalcondition}
For every left-invariant metric $g$ on $\SU(2)$, we have that \begin{equation}\label{eq6:strictinequality}
\frac{\op{Scal}(\SU(2),g)}{2} \leq \lambda_1(\SU(2),g).
\end{equation}
Moreover, the equality holds if and only if $(\SU(2),g)$ is a round sphere. 
\end{proposition}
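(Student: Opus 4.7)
The plan is to write both sides explicitly in terms of the parameters $a,b,c$ (assuming $a\geq b\geq c>0$) using Proposition~\ref{prop2:Ricci+scal} and Theorem~\ref{thm1:SUlambda_1}, reducing \eqref{eq6:strictinequality} to the purely algebraic statement
\begin{equation*}
2(a^{2}+b^{2}+c^{2})-\left(\tfrac{b^{2}c^{2}}{a^{2}}+\tfrac{a^{2}c^{2}}{b^{2}}+\tfrac{a^{2}b^{2}}{c^{2}}\right)
\;\leq\; \min\{\,a^{2}+b^{2}+c^{2},\;4(b^{2}+c^{2})\,\}.
\end{equation*}
I would then split the argument according to which term realizes the minimum, i.e.\ according to whether $a^{2}\leq 3(b^{2}+c^{2})$ or $a^{2}\geq 3(b^{2}+c^{2})$.

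In the first case the inequality reduces to
\begin{equation*}
a^{2}+b^{2}+c^{2} \;\leq\; \tfrac{b^{2}c^{2}}{a^{2}}+\tfrac{a^{2}c^{2}}{b^{2}}+\tfrac{a^{2}b^{2}}{c^{2}},
\end{equation*}
which after multiplying by $a^{2}b^{2}c^{2}$ becomes the symmetric polynomial inequality
\begin{equation*}
x^{2}yz+xy^{2}z+xyz^{2}\;\leq\; x^{2}y^{2}+x^{2}z^{2}+y^{2}z^{2},
\end{equation*}
with $x=a^{2},\,y=b^{2},\,z=c^{2}$. This is exactly Muirhead's inequality applied to the majorization $[2,2,0]\succeq [2,1,1]$ (as flagged in the acknowledgments), and it is well known that equality holds precisely when $x=y=z$, i.e.\ $a=b=c$.

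In the second case ($a^{2}\geq 3(b^{2}+c^{2})$) the inequality to prove becomes
\begin{equation*}
2a^{2}-2(b^{2}+c^{2})\;\leq\; \tfrac{b^{2}c^{2}}{a^{2}}+\tfrac{a^{2}c^{2}}{b^{2}}+\tfrac{a^{2}b^{2}}{c^{2}}.
\end{equation*}
Here AM--GM applied to the last two summands on the right gives $\frac{a^{2}c^{2}}{b^{2}}+\frac{a^{2}b^{2}}{c^{2}}\geq 2a^{2}$, so the right-hand side is at least $2a^{2}$, which strictly exceeds $2a^{2}-2(b^{2}+c^{2})$. In particular the inequality is strict in this case, so no equality arises here.

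Combining the two cases yields \eqref{eq6:strictinequality} for every left-invariant metric, and the equality analysis shows that equality forces $a=b=c$, which by the choice of basis \eqref{eq2:X1X2X3} corresponds exactly to a round metric. The main (and really only) obstacle is recognizing the correct symmetric-function inequality in Case~1; once it is cast in the form above, Muirhead (or a direct AM--GM on the pairs $\{x^{2}y^{2},y^{2}z^{2}\}$, $\{x^{2}y^{2},x^{2}z^{2}\}$, $\{x^{2}z^{2},y^{2}z^{2}\}$) closes the argument together with the sharp identification of equality.
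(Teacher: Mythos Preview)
Your proof is correct and essentially identical to the paper's. The paper also reduces to the same algebraic inequality, handles the comparison with $a^{2}+b^{2}+c^{2}$ via Muirhead (with equality iff $a=b=c$), and handles the comparison with $4(b^{2}+c^{2})$ via the equivalent observation $b^{4}+c^{4}\geq 2b^{2}c^{2}$ (your AM--GM step $\tfrac{a^{2}c^{2}}{b^{2}}+\tfrac{a^{2}b^{2}}{c^{2}}\geq 2a^{2}$). The only cosmetic difference is that the paper proves both comparisons for all $a\geq b\geq c>0$ and then takes the minimum, rather than splitting on which term realizes the minimum; mathematically this amounts to the same thing.
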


\begin{proof}
By Theorem~\ref{thm1:SUlambda_1} and Proposition~\ref{prop2:Ricci+scal}, \eqref{eq6:strictinequality} is equivalent to 
\begin{equation}\label{eq6:inequality1}
2(a^2+b^2+c^2)- \frac{a^4b^4+a^4c^4+b^4c^4}{a^2b^2c^2} \leq \min\{a^2+b^2+c^2, 4(b^2+c^2)\}
\end{equation}
for all $a\geq b\geq c>0$.
We first show that the left-hand side in \eqref{eq6:inequality1} is less than or equal to $a^2+b^2+c^2$ for all $a,b,c$. 
Indeed, by elementary manipulations, this is equivalent to $(a^2+b^2+c^2)a^2b^2c^2 \leq a^4b^4+a^4c^4+b^4c^4$. 
The last condition follows immediately by Muirhead's inequality (see for instance \href{https://en.wikipedia.org/wiki/Muirhead%27s_inequality}{Wikipedia's page \emph{Muirhead's inequality}}), since the vector $(4,4,0)$ majorizes $(4,2,2)$. 
Moreover, equality holds if and only if $a=b=c$, as asserted. 

It remains to show that the left-hand side in \eqref{eq6:inequality1} is less than $4(b^2+c^2)$, or equivalently
\begin{equation}
a^2 < \frac{a^4b^4+a^4c^4+b^4c^4}{2a^2b^2c^2} +b^2+c^2 
= a^2\frac{(b^4+c^4)}{2b^2c^2} + \frac{b^2c^2}{2a^2}+b^2+c^2,
\end{equation}
which follows immediately since $b^4+c^4\geq 2b^2c^2$. 
\end{proof}

\begin{proposition}\label{prop6:SOScalcondition}
For every left-invariant metric $g$ on $\SO(3)$, we have that \begin{equation}\label{eq6:SOstrictinequality}
\frac{\op{Scal}(\SO(3),g)}{2}< \lambda_1(\SO(3),g).
\end{equation}
\end{proposition}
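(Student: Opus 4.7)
The plan is to reduce the claimed inequality to a purely algebraic statement in the parameters $a \geq b \geq c > 0$ of Proposition~\ref{prop2:left-invSU(2)}, and then dispatch it by an AM--GM-type argument, in complete analogy with the second half of the proof of Proposition~\ref{prop6:SUScalcondition}. The crucial input is that Theorem~\ref{thm1:SOlambda_1} gives the \emph{unconditional} equality $\lambda_1(\SO(3),g_{\abc}) = 4(b^2+c^2)$, so unlike the $\SU(2)$ case there is no minimum to compare against: one does not need to show an analogue of the bound against $a^2+b^2+c^2$. This is why the inequality can be made strict for \emph{every} left-invariant metric on $\SO(3)$, including the bi-invariant one.

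First, using Proposition~\ref{prop2:Ricci+scal} together with Theorem~\ref{thm1:SOlambda_1}, the desired inequality $\op{Scal}(\SO(3),g_{\abc})/2 < \lambda_1(\SO(3),g_{\abc})$ is equivalent to
\begin{equation*}
2(a^2+b^2+c^2) - \frac{a^4b^4+a^4c^4+b^4c^4}{a^2b^2c^2} \;<\; 4(b^2+c^2),
\end{equation*}
which, after rearranging, reads
\begin{equation*}
a^2 \;<\; \frac{a^4b^4+a^4c^4+b^4c^4}{2a^2b^2c^2} + b^2 + c^2 \;=\; a^2\,\frac{b^4+c^4}{2b^2c^2} + \frac{b^2c^2}{2a^2} + b^2 + c^2.
\end{equation*}

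Next I would use the elementary inequality $b^4+c^4 \geq 2b^2c^2$ (equivalently $(b^2-c^2)^2 \geq 0$) to bound the first term on the right-hand side from below by $a^2$. The remaining terms $\tfrac{b^2c^2}{2a^2} + b^2 + c^2$ are strictly positive, so the inequality is strict, and the proof is complete. The only observation worth stressing is why strictness does \emph{not} fail at $b=c$ (the case of equality in AM--GM): even there, $b$ and $c$ are positive, so the leftover summands contribute a genuine positive quantity. There is no real obstacle here; the argument is the second half of the proof of Proposition~\ref{prop6:SUScalcondition} applied verbatim, with the simplification that the case analysis against $a^2+b^2+c^2$ is no longer needed.
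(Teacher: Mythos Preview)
Your proof is correct, but it takes a different route from the paper's. The paper derives the $\SO(3)$ inequality in two lines from Proposition~\ref{prop6:SUScalcondition}: since $\op{Scal}(\SO(3),g_{\abc})=\op{Scal}(\SU(2),g_{\abc})$ (same local geometry) and $\lambda_1(\SU(2),g_{\abc})\leq\lambda_1(\SO(3),g_{\abc})$ (the spectrum of the quotient is contained in that of the cover), one inherits $\tfrac12\op{Scal}\leq\lambda_1(\SO(3),g)$; strictness then only needs checking at the unique equality case $a=b=c$ of Proposition~\ref{prop6:SUScalcondition}, where $\lambda_1(\SU(2))=3a^2<8a^2=\lambda_1(\SO(3))$.

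Your approach instead bypasses Proposition~\ref{prop6:SUScalcondition} entirely and redoes the relevant algebraic estimate directly, exploiting that Theorem~\ref{thm1:SOlambda_1} gives $\lambda_1(\SO(3),g_{\abc})=4(b^2+c^2)$ with no case split. This is exactly the second half of the proof of Proposition~\ref{prop6:SUScalcondition}, as you note. The paper's route is shorter once Proposition~\ref{prop6:SUScalcondition} is in hand and emphasizes the covering relationship; your route is self-contained, avoids a separate treatment of the round case, and would stand on its own even without the $\SU(2)$ proposition.
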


\begin{proof}
The proof follows immediately from Proposition~\ref{prop6:SUScalcondition} and the fact that $\lambda_1(\SU(2),g_{\abc}) \leq \lambda_1(\SO(3),g_{\abc})$ 
for all $a,b,c$, with strict inequality when $a=b=c$. 
\end{proof}

\begin{proof}[Proof of Theorem~\ref{thm1:Yamabe}]
When a left-invariant metric $g$ on $\SU(2)$ has not constant sectional curvature, the strict inequality in \eqref{eq6:strictinequality} implies that $g$ is a locally rigid solution of the Yamabe problem according to \cite[Cor.~2.5]{BettiolPiccione13a}. 
This proves Theorem~\ref{thm1:Yamabe} for a non-round left-invariant metric on $\SU(2)$. 
The case when $g$ is any left-invariant metric on $\SO(3)$ follows from Proposition~\ref{prop6:SOScalcondition}. 
\end{proof}


\bibliographystyle{plain}

\end{document}